\documentclass[journal,10pt,onecolumn,draftclsnofoot,]{IEEEtran}


\usepackage{amsmath,amssymb,amsfonts}
\usepackage{amsthm}
\usepackage{hyperref}
\usepackage[labelformat=simple]{subcaption}

\usepackage{mathrsfs}
\usepackage{epsfig} %
\usepackage{graphicx}
\usepackage{graphics}
\usepackage{cite}
\usepackage{psfrag}
\usepackage{epstopdf}
\usepackage{tikz}
\PassOptionsToPackage{usenames,dvipsnames,svgnames}{xcolor} 

\usetikzlibrary{automata, arrows,decorations.pathmorphing,backgrounds,positioning,fit}
\usetikzlibrary{shapes.geometric}
\usetikzlibrary{circuits.ee,circuits.ee.IEC}

\definecolor{mycolor_peach}{RGB}{251,111,66}
\definecolor{mycolor_lightblue}{RGB}{8,180,238}
\definecolor{mycolor_darkblue}{RGB}{1,17,181}
\definecolor{mycolor_teal}{RGB}{18,150,155}
\definecolor{mycolor_green}{RGB}{12,195,82}
\definecolor{mycolor_lightgreen}{RGB}{8,180,238}

\usepackage{tikz-3dplot}
\usepackage{pgfplots}
\usepackage{cleveref}
\newcommand{\bm}[1]{{\boldsymbol{#1}}}

\theoremstyle{definition}

\newtheorem{example}{Example}

\theoremstyle{plain}
\newtheorem{theorem}{Theorem}
\newtheorem{proposition}{Proposition}

\newtheorem{corollary}{Corollary}

\theoremstyle{remark}

\newtheorem{remark}{Remark}

\DeclareMathOperator*{\argmin}{arg\,min}

\begin{document}

\title{Consensus Driven by the Geometric Mean}

\author{Herbert~Mangesius, 
      Dong~Xue,~Sandra~Hirche
\thanks{The authors are with the Chair of Information-Oriented Control, Department of Electrical and Computer Engineering, Technische Universit\"{a}t M\"{u}nchen (TUM), Arcisstrasse 21, D-80209 Munich, Germany. 
e-mail: \texttt{ \{mangesius, dong.xue, hirche\}@tum.de} }
\thanks{The work is partially supported by the German Research Foundation (DFG) within the Priority Program SPP 1914 “Cyber-Physical Networking”, the EU H2020 Innovative Training Network (ITN) "Platform-aware Model-driven Optimization of Cyber-Physical Systems (oCPS)" under grant agreement no. 674875, and the TUM Institute for Advanced Study.}
}

\markboth{H. Mangesius, D. Xue \& S. Hirche}{H. Mangesius, D. Xue \& S. Hirche}

\maketitle

\begin{abstract}
Consensus networks are usually understood as arithmetic mean driven dynamical averaging systems.
In applications, however, network dynamics often describe inherently non-arithmetic and non-linear consensus processes. In this paper, we propose and study three novel consensus protocols driven by geometric mean averaging: a polynomial, an entropic, and a scaling-invariant protocol, where terminology characterizes the particular non-linearity appearing in the respective differential protocol equation. 
We prove exponential convergence to consensus for positive initial conditions. 
For the novel protocols we highlight connections to applied network problems: 
The polynomial consensus system is structured like a system of chemical kinetics on a graph. The entropic consensus system converges to the weighted geometric mean of the initial condition, which is an immediate extension of the (weighted) average consensus problem.
We find that all three protocols generate gradient flows of free energy on the simplex of constant mass distribution vectors albeit in different metrics. 
On this basis, we propose a novel variational characterization of the geometric mean as the solution of a non-linear constrained optimization problem involving free energy as cost function.
We illustrate our findings in numerical simulations. 
\end{abstract}


\IEEEpeerreviewmaketitle

\section{Introduction}
Under the umbrella of linear consensus theory, results are collected that describe the convergence and stability of a very general class of linear time-varying, arithmetic mean driven network 
dynamics, see, e.g., \cite{Olfati-Saber2007, 
Tsitsiklis1989, Moreau2004}. What suffers from this generality is the specificity that is usually needed to make immediate use of those results in applied network problems - problems, which often 
appear as non-linear and time-invariant dynamics that are inherently driven by non-arithmetic means. A prime example is the class of Kuramoto-type network models \cite{Acebron2005} that can be found in a wide range of 
important applications, e.g., in power grid studies \cite{Doerfler2014, Hendrickx2014}, or in neuroscience \cite{Breakspear2010}. The collective averaging motion is driven by the so-called chordal 
mean, which is an average adapted to the circular geometry of phase angles \cite{Sepulchre2011, SarletteSIAM2009, Scardovi2007}. Important stability results can indeed be based on linear 
consensus theory, see, e.g., the work \cite{Jadbabaie2004}, where the authors prove stability by reverse engineering for this particular case a linear time-varying consensus structure from the non-linear time-invariant original system model.
In this paper the starting point is not an existing non-linear network model, but
a significant type of average, namely the geometric mean, that shall serve as the driving element in a non-linear dynamic averaging network.
In particular, we are interested in designing and studying network protocols which generate geometric mean averaging processes in the same way the arithmetic mean does in linear consensus protocols.
For the novel types of geometric mean driven network dynamics we propose touching points to non-linear problems in chemistry, optimization and analog computation using networked dynamical systems.

The geometric mean plays an important role in various applications.
It is the appropriate tool to evaluate averages on data 
that exhibits power law relationships, as they arise in describing relative, resp., compound growth relations \cite{Spizman2008}. Examples of such relations can be found in financial studies \cite{Zenner2008, Mitchell2004}, they are abundant in biology \cite{Shingleton2010} and chemistry \cite{Connors1990, HaraAutomatica2011}. For instance, in gene expression networks, the geometric mean of degradation and production rates has been found to act as feedback control gain in linearized dynamics \cite{HaraAutomatica2011}.
Geometric mean averaging appears also in the context of algorithm design, in distributed Bayesian consensus filtering and detection 
schemes, see, e.g.,\cite{ChungACC2014} and \cite{QLiu2015}. There, the geometric mean arises from the combination of a given network structure and a Bayesian update rule, leading to a so-called 
logarithmic opinion pooling as natural scheme of combining local probabilities, see \cite{Zidek1986}, and also \cite{NedicArxiv2015,JadbabaieCDC2013,JadbabaieScDir2015} for further reference.

Despite the central role of mean functions and averaging structures for stability studies in network problems,
yet there are only few works on how specific means, and in particular the geometric mean, drive the (non-linear) behavior of such systems.
Consensus-like protocols driven by non-arithmetic
means with geometric mean as particular case are
for instance studied in \cite{Krause2005} in the particular context of opinion dynamics in discrete time.
Works on consensus on non-linear space \cite{SarletteSIAM2009, Sepulchre2011} extend the usual arithmetic averaging in linear consensus to a non-linear configuration space; the 
associated non-arithmetic mean results as a by-product of that choice of geometry.
Extensions to other mathematical structures include the work on consensus on convex metric spaces  \cite{Baras2015}, or on the Wasserstein metric space of probability measures \cite{Bishop2014}.
None of these works puts in the center of consideration a particular type of average, from where continuous-time network dynamics shall arise.

In this work we propose and study three novel non-linear consensus protocols on the basis of elementary considerations on how the arithmetic mean appears in the structure of linear consensus protocols and replacing it by the geometric mean functional relationship.
In particular, we contribute by 
(i) introducing geometric mean driven network protocols that we call polynomial, entropic, and scaling-invariant protocol, 
and
(ii) we prove convergence to consensus under appropriate connectedness conditions.
(iii) We show that the entropic consensus system convergences to the (weighted) geometric mean of the initial state components, which is the geometric mean extension of the usual (arithmetic mean) 
average consensus problem. (iv) We bring the polynomial consensus protocol in relation with reaction rates in chemical kinetics, thus building a bridge between 
consensus theory and biochemistry. (v) We propose a novel variational characterization of the geometric mean in a free energy non-linear constrained minimization problem. (vi) We put the three 
distinct protocols on a common footing by showing that all three protocols describe a particular type of free energy gradient descent flow. 

The remainder of this article is organized as follows: 
In section \ref{sec:prelim} we give an overview on mean functions and linear consensus theory.
In section \ref{sec:meandyn} we propose the novel protocols and discuss relationships to arithmetic-mean averaging structures in linear consensus networks.
In section \ref{sec:results} we prove exponential convergence and consensus value results.
In section \ref{sec:energetics}, we put the three novel consensus protocols in a single free energy gradient flow framework and provide a novel variational characterization of the geometric mean. 


\section{Preliminaries \label{sec:prelim}}
In this section, we present basic facts from the fields of mean functions and linear consensus theory.

\subsection{Mean functions}
Consider data points $x_1,x_2,\ldots,x_n$ taking values on the positive real line $\mathbb{R}_{>0}$, and let these elements be collected in the vector $\bm{x}$. An average or mean computed from 
$\bm{x}$ can be obtained as the solution of an unconstrained minimization,
\begin{equation}\label{eq:varmean}
\mathsf{mean}(\bm{x})= \arg \min_{x\in \mathbb{R}_{>0}} \sum_{i=1}^nd(x_i,x)^2,
\end{equation}
where $d(a,b)$ denotes a metric in $\mathbb{R}_{>0}$, i.e., a positive definite and symmetric function, that vanishes iff $a=b$.

If the Euclidean distance $d_\mathsf{E}(a,b):=|a-b|$ is chosen in~\eqref{eq:varmean}, the resulting average is the arithmetic mean,
\begin{equation}\label{eq:amdef}
 \mathsf{am}(\bm{x}):=\frac{1}{n}\sum_{i=1}^nx_i=\argmin_{x\in \mathbb{R}} \sum_{i=1}^n|x_i-x|^2.
\end{equation}

Another important metric in $\mathbb{R}_{>0}$ is the hyperbolic distance~\mbox{$d_\mathsf{H}(a,b):=|\ln a -\ln b|$},
which coincides with the Euclidean metric assessed in logarithmic coordinates.
It is a geodesic distance, and measures the hyperbolic length of the straight line segment joining two points in Cartesian coordinates $(x,a), (x,b)$, $x\in \mathbb{R}_{>0}$, see e.g., 
\cite{Stahl1993} Prop. 4.3.

Its significance arises from the fact that the solution of the minimization problem \eqref{eq:varmean} using the hyperbolic metric $d_\mathsf{H}$ yields the geometric 
mean 
\begin{equation}
\mathsf{gm}(\bm{x}):=\sqrt[n]{x_1x_2\cdots x_n}.
\end{equation}

To see this, observe that
\begin{equation}
\sum_{i=1}^n|\ln x_i -\ln \mathsf{gm}(\bm{x})|^2=\sum_{i=1}^n|\ln x_i -\mathsf{am}(\ln\bm{x})|^2, \label{eq:soslog}
\end{equation}
which is the least-squares characterization of the arithmetic mean in logarithmic coordinates.

To complete this section we introduce the weighted versions of the arithmetic and geometric means,
\begin{equation}
\mathsf{am}_w(\bm{x}):=\sum_{i=1}^n\omega_ix_i, \ \ \text{and} \ \ \mathsf{gm}_w(\bm{x}):=\prod_{i=1}^nx_i^{\omega_i},
\end{equation}
where for $i=1,2,\ldots,n$, $\omega_i>0$ and $\sum_{i=1}^n\omega_i=1$.

\subsection{Graphs, linear consensus protocols \& the arithmetic mean}
Let $\mathsf{G}=(N,B,w)$ be a weighted digraph (directed graph) with set of nodes $N:=\{1,2,\ldots,n\}$, set of branches
$B:=\{1,2,\ldots,b\}\subseteq N\times N$ having elements ordered pairs $(j,i)$ that indicate that there is a
branch from node $j$ to $i$, and $w:B\to \mathbb{R}_{>0}$ is a weighting function for which we write $w((j,i))=w_{ij}$.
Define the in-neighborhood of a node $i$ as the set of connected nodes $N_i^+:=\{j\in N: (j,i)\in B\}$ and the out-neighborhood $N_i^-:=\{j\in N,(i,j)\in B\}$.
 The (in-)degree of a node $i$ is the value $d_i:=\sum_{j\in N_i^+} w_{ij}$.
 Set $\textbf{D}:=\mathsf{diag}\{d_1,d_2,\ldots,d_n\}$. 
The weighted adjacency matrix $\textbf{W}$ is such that
 $[\textbf{W}]_{ij}=w_{ij}$ for all $(j,i)\in B$; if $(j,i) \not \in B$, then $[\textbf{W}]_{ij}=0$, and $[\textbf{W}]_{ii}=0$, for all 
$i\in N$. A graph is called \emph{balanced} if $\sum_{j=1}^{n}w_{ij}=\sum_{j=1}^{n}w_{ji}$ and it is \emph{symmetric} if $w_{ij}=w_{ji}$, 
$\forall(j,i)\in B$. The Laplacian matrix of a weighted digraph is defined as $\textbf{L}:=\textbf{D}-\textbf{W}$,
and
the normalized Laplacian is $\hat{\textbf{L}}:=\textbf{I}-\hat{\textbf{W}}$, where $\hat{\textbf{W}}=\textbf{D}^{-1}\textbf{W}$ is the 
matrix of normalized branch weights.

A linear consensus system evolving in continuous time is a dynamics on a family of graphs $\{\mathsf{G}(t)\}_{t\geq 0}$ governed
by
\begin{equation}\label{eq:lincons}
\dot{x}_i=\sum_{j\in N_i^+}w_{ij}(t)\left(x_j-x_i\right)\ \ \Leftrightarrow \ \ \dot{\bm{x}}=-\textbf{L}(t)\bm{x},
\end{equation}
where each dynamic branch weight $w_{ij}(\cdot)$ is a measurable non-negative function \cite{Hendrickx2013}. 


The following relationships between the arithmetic mean and consensus system representations and properties are well known in consensus theory: Using \eqref{eq:amdef}, a component-wise LTI consensus dynamics \eqref{eq:lincons} on a normalized weighted digraph can locally be brought to the open-loop control system form
\begin{align}
\dot{x}_i=- x_i +  & u_i(\{ x_j \}_{j\in N_i^+}), \label{eq:amconscontrol}\\
 &u_i= \sum_{j\in N_i^+} \hat{w}_{ij}(t) x_j \triangleq  \mathsf{am}_w(\{x_j\}_{j\in N_i^+}).
\end{align}

Without the requirement of a normalized weighting, a variable time discretization can be chosen such that a local algorithmic update law (e.g. in an explicit Euler scheme)
has the arithmetic mean driven form
\begin{equation}
x_i(t+\mathrm{d}t)=\alpha(\mathrm{d}t)x_i(t) + [1-\alpha(\mathrm{d}t)] \mathrm{am}_w(\{x_j(t)\}_{j\in N_i^+(t)},
\end{equation}
where $0\leq \alpha < 1$, cf., e.g., \cite{Scardovi2007}.

Besides its appearance in the local dynamics at a certain instant in time, the arithmetic average unfolds also as asymptotic global system property: in the class of consensus networks being 
governed by Laplacians $\textbf{L}(t)$ that are irreducible and balanced for all $t\geq 0$, the asymptotically reached uniform agreement value is given by the arithmetic mean 
of the initial condition \cite{Murray2004}.
The problem in which the equilibrium state to be reached is uniform with consensus value $\bar{x}=\mathsf{am}(\bm{x}_0)$ is commonly known as the average consensus problem.

The goal in this work is to study the interplay between 
consensus protocols and the geometric mean.
In that, we first seek to understand the various interaction points between the design of LTI consensus protocols and the arithmetic mean, and then 
leverage these observations to derive and study novel geometric mean driven consensus protocols.

For the sake of focus and ease of understanding, we assume the underlying graph to have a constant, i.e., time-invariant weighting.
In our analysis it shall turn out elementary to transform the non-linear time-invariant network protocols to linear time-varying consensus form, so that
the following convergence result becomes applicable.

\begin{proposition}\label{prop:moreau}[Adopted from \cite{Sepulchre2011} Prop. 1 with Def. 2]
A linear time-varying system evolving according to \eqref{eq:lincons} in $\mathbb{R}^n$ converges globally and exponentially to a consensus point $\bar{x}\bm{1}$, $\bar{x}\in\mathbb{R}$, if the underlying digraph is uniformly connected, i.e., if for all $t>0$, there exists a time horizon $T>0$, such that the graph $(N,\tilde{B}(t),\tilde{w}(t))$ defined by
\begin{equation}
\tilde{w}_{ij}(t):= \left\{ \begin{tabular}{l l}
$\int_t^{t+T} w_{ij}(\tau)\mathrm{d}\tau$ & if  \; $\int_t^{t+T} w_{ij}(\tau)\mathrm{d}\tau \geq \delta >0$ \\
$0$ & if \; $\int_t^{t+T} w_{ij}(\tau)\mathrm{d}\tau < \delta$
\end{tabular}\right.
\end{equation}
$w_{ij}(\tau)$ a branch weight at time $\tau$, $(j,i) \in B$ if and only if $\tilde{w}_{ij}(t)\not = 0$, contains a node from which there is a path to every other node.
\end{proposition}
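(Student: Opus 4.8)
The plan is to reduce the statement to classical structural properties of the state-transition matrix of \eqref{eq:lincons} together with a contraction (coefficient-of-ergodicity) estimate that is activated precisely by the uniform connectivity hypothesis. Write $\Phi(t,s)$ for the transition matrix, i.e. the solution of $\tfrac{\mathrm{d}}{\mathrm{d}t}\Phi(t,s)=-\bm{L}(t)\Phi(t,s)$ with $\Phi(s,s)=\bm{I}$, so that $\bm{x}(t)=\Phi(t,s)\bm{x}(s)$. First I would record the structural facts: since $-\bm{L}(t)$ is Metzler (nonnegative off-diagonal entries $w_{ij}(t)\ge 0$) with zero row sums, $\Phi(t,s)$ is, for every $t\ge s$, a row-stochastic matrix with strictly positive diagonal. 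In particular $\bm{x}(t)$ stays in the convex hull of $\{x_i(s)\}_i$, so $\underline{x}(t):=\min_i x_i(t)$ is nondecreasing, $\overline{x}(t):=\max_i x_i(t)$ is nonincreasing, and $V(\bm{x}(t)):=\overline{x}(t)-\underline{x}(t)$ is nonincreasing. This already gives boundedness and monotone convergence of $\overline{x}(t)$ and $\underline{x}(t)$, so it only remains to show $V(\bm{x}(t))\to 0$ exponentially, since then $\overline{x}(t)$ and $\underline{x}(t)$ share a common limit $\bar x$ and $\bm{x}(t)\to\bar x\bm{1}$.

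The core step is a per-window estimate: there exists $\delta'\in(0,1]$, depending only on $n$, $\delta$, $T$ and a bound on the weights active over the window, such that the matrix $\Phi\big(t+(n-1)T,\,t\big)$ has at least one column whose entries are all $\ge\delta'$. To get it I would first prove a one-edge lemma: if $\int_t^{t+T}w_{kj}(\tau)\,\mathrm{d}\tau\ge\delta$ then $[\Phi(t+T,t)]_{kj}\ge\beta>0$ for an explicit constant $\beta=\beta(\delta,T,\cdot)$ — this is the standard Duhamel/positivity computation that lower-bounds an off-diagonal entry of a Metzler transition matrix by "keeping" the $(k,j)$ branch and the diagonal terms and discarding the rest. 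By uniform connectivity, for each $t$ the aggregated graph on $[t,t+T]$ has a root $r$ from which a directed path of length $\le n-1$ reaches every node; concatenating the one-edge estimates along such paths over the consecutive windows $[t,t+T],[t+T,t+2T],\dots$, using the semigroup property $\Phi(t+(n-1)T,t)=\Phi(t+(n-1)T,t+(n-2)T)\cdots\Phi(t+T,t)$, and padding shorter paths with the positive diagonal, yields $[\Phi(t+(n-1)T,t)]_{ir}\ge\beta^{\,n-1}=:\delta'$ for every node $i$.

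With the per-window estimate in hand I would invoke the elementary fact that a row-stochastic matrix $\bm{P}$ having a column bounded below by $\delta'$ contracts the spread: $V(\bm{P}\bm{y})\le(1-\delta')\,V(\bm{y})$ for all $\bm{y}$ (proved by comparing any two rows of $\bm{P}$, which both assign weight at least $\delta'$ to the same coordinate). Applying this with $\bm{P}=\Phi\big(t_0+(n-1)T,t_0\big)$ and iterating over successive windows, while using monotonicity of $V$ on the partial windows in between, gives $V(\bm{x}(t))\le(1-\delta')^{\lfloor (t-t_0)/((n-1)T)\rfloor}\,V(\bm{x}(t_0))$, i.e. exponential decay of $V$ with an explicit rate; combined with the first paragraph this proves global exponential convergence to $\bar x\bm{1}$.

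The main obstacle I anticipate is the one-edge lemma and, relatedly, making $\delta'$ genuinely uniform in $t$: a single large integral $\int_t^{t+T}w_{kj}$ does not by itself pin down a transition-matrix entry unless one also controls the competing terms — in particular the diagonal of $\bm{L}(t)$, i.e. the total in-degrees — so one must either assume, as is implicit in the source, a uniform bound on the active weights/degrees, or argue via an integral lower bound on $\Phi(t+T,t)$ that survives without such a bound. Handling this technical point cleanly, and propagating it along paths of length up to $n-1$ over up to $n-1$ windows without deterioration of the constant, is where the real work lies; the remainder is bookkeeping.
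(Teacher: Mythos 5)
The paper itself offers no proof of this proposition---it is imported verbatim from \cite{Sepulchre2011} (Moreau's theorem), whose proof proceeds by a set-valued, convex-hull contraction argument rather than transition-matrix bookkeeping---so your attempt stands on its own. Your overall architecture (row-stochasticity of $\Phi(t,s)$, monotonicity of the spread $V=\max_i x_i-\min_i x_i$, a per-window positive-column estimate, then the ergodicity-coefficient contraction $V(\bm{P}\bm{y})\le(1-\delta')V(\bm{y})$) is the classical route and can be completed, and you correctly flag that the one-edge lemma needs a uniform bound on the weights, which is implicit in the source. The genuine gap is the per-window estimate itself: the rooted path you use lives in the \emph{aggregated} graph of the single window $[t,t+T]$, but your concatenation needs its $k$-th edge to be $\delta$-active in the $k$-th \emph{subsequent} window, which uniform connectivity does not provide---it only guarantees that each window's aggregated graph is rooted, at a root that may change from window to window. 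Moreover the combinatorial fact your claim would rest on, namely that the ordered product of $n-1$ row-stochastic matrices with positive diagonals, each of whose graphs is rooted, always has a strictly positive column, is false. Take $n=4$ and the three rooted graphs $G_1=\{1\to2,\,2\to3,\,3\to4\}$ (root $1$), $G_2=\{4\to1,\,4\to3,\,1\to2\}$ (root $4$), $G_3=\{2\to4,\,4\to1,\,4\to3\}$ (root $2$): tracking, with self-loops, which nodes each $j$ can influence using at most one edge per stage in temporal order, node $1$ reaches only $\{1,2,4\}$, node $2$ only $\{2,3,4\}$, and nodes $3,4$ only $\{1,3,4\}$; no column of the three-stage product is positive. (Note also that within one window only single aggregated edges plus the diagonal give guaranteed positive entries of $\Phi(t+T,t)$, since an adversarial temporal ordering of activations inside the window blocks multi-hop influence---so this discrete obstruction is exactly the relevant one.)

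The repair is standard but changes the constants and requires an extra idea you did not supply: either the same node is a root of every window's aggregated graph, in which case your growth argument ($S_0=\{r\}$, and $S$ gains at least one node in every window rooted inside $S$, padding with the positive diagonal) does give a positive column after $n-1$ windows; or, for varying roots, one must take more windows---e.g.\ invoke the fact (Cao, Morse and Anderson) that every composition of $(n-1)^2$ rooted graphs with self-arcs is strongly rooted, or argue by pigeonhole that among $O(n^2)$ consecutive windows some fixed node is a root at least $n-1$ times, which suffices because the reached set never shrinks. With the window count enlarged to such an $n$-dependent bound, your one-edge lemma, the positive-column contraction, and the iteration go through verbatim and yield global exponential convergence, albeit with a slower explicit rate; alternatively one can bypass the entry-wise estimates entirely and follow Moreau's original set-valued Lyapunov argument, which handles the varying roots directly.
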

 Uniform connectivity certainly holds if at 
each time instant the graph $\mathsf{G}(t)$ is strongly connected and $w_{ij}(t)\geq \delta>0$, i.e., if the graph contains a directed path from every node to every other node and the finite branch 
weights are positively bounded away from zero for all time.

\section{Geometric mean driven Network protocols\label{sec:meandyn}}
In this section we propose and motivate three novel geometric mean driven network protocols.

\subsection{Polynomial protocol}
The polynomial protocol we consider is a dynamics on a graph where at each node $i \in N$, the differential update rule has the form 
\begin{equation}\label{eq:polyprot}
\dot{x}_i=-\prod_{j\in N_i^-} x^{w_{ji}}_i +\prod_{j\in N_i^+} x^{w_{ij}}_j.
\end{equation}
For the polynomial protocol we assume a balanced graph weighting, i.e., $\sum_{j\in N_i^-}w_{ji}=d_i$.
With that, the protocol \eqref{eq:polyprot} can be written as
\begin{equation}
\dot{x}_i=-x^{d_i}_i +\prod_{j\in N_i^+} x^{w_{ij}}_j.
\end{equation}
Comparing this form with a linear consensus protocol, which can be stated as
\begin{equation}
\dot{x}_i=-d_ix_i +\sum_{j\in N_i^+} w_{ij}x_j, \ \ \ i \in N
\end{equation}
we observe an equivalence resulting upon replacing the operation of summation and multiplication with the similar\footnote{These operations are similar in the sense that the addition of logarithmic variables turns the variables into products, and products result into exponentiation.} operations multiplication and exponentiation. 

Alternatively, referring to the open-loop control representation \eqref{eq:amconscontrol}, where weightings are normalized, replacement of the weighted arithmetic mean by the geometric average leads to the protocol
\begin{equation}
\dot{x}_i=-x_i+\mathsf{gm}_w(\{x_j\}_{j\in N_i^+})=-x_i + \prod_{j\in N_i^+}x_j^{\hat{w}_{ij}},
\end{equation}
from where \eqref{eq:polyprot} results again under the assumption of 
having a balanced weighting, that is, $\sum_{j\in N_i^-}\hat{w}_{ji}=1$.

In its general form \eqref{eq:polyprot}, the polynomial protocol has the structure of a rate equation as it occurs for instance in reaction networks and chemical kinetics \cite{Connors1990}.
We define
\begin{equation}\label{eq:rip}
r_i^+:=\prod_{j\in N_i^+}x_j^{w_{ij}},
\end{equation} 
the non-linear rate at which some quantity ``$x$'' flows from in-connected nodes $j$ to node $i$, and
\begin{equation}\label{eq:rim}
r_i^-:=\prod_{j\in N_i^-}x_i^{w_{ji}},
\end{equation}
the rate at which $x$ flows along links $(i,j)\in B$ from node $i$ to the out-directed nodes $j$. The local rate of change $\dot{x}_i$ balances in- and out-flows on a graph, as $\dot{x}_i= 
r_i^+(\bm{x})-r_i^-(\bm{x})$.
The relation to the (weighted) geometric mean and the similarity to chemical kinetics in reaction networks are described in the following example.

\begin{example}[Chemical kinetics]
In mass action chemical reaction networks, the net rate equation for a concentration of one component $i$ in one reaction involving $n$ 
substances indexed in $N$ is split into a difference of a forward and a backward reaction rate, each having the form
\begin{equation}\label{eq:lnkinetic}
r_i^\pm=k^\pm \prod_{j=1}^n x_j^{s_{j}^\pm}=e^{ \sum_j s_j^\pm\ln x_j +\ln k^\pm }
\end{equation}
where $\pm$ stands either for the forward or backward rate,
   and $k^\pm>0$ is the associated forward/backward reaction constant.   
The weights $s_{j}^\pm>0$ are stoichiometric coefficients.
The representation \eqref{eq:lnkinetic} has been instrumental in the recent studies
\cite{vdSchaft2013,vdSchaftIFAC2013,Yong2012} that shed light on a systems theoretic structure of chemical reaction networks: Introducing the density vector $\bm{\rho}$, with $\rho_i=\frac{x_i}{\bar{x}_i}, i\in N$,
under a detailed balance assumption on the equilibrium concentrations $\bar{\bm{x}}$, it can be shown that 
\begin{equation}
\sum_j s_j^\pm\ln x_j +\ln k^\pm =\sum_j s_j^\pm\ln \rho_j.
\end{equation}
Observe that
\begin{equation}
e^{ \sum_j s_j^\pm\ln \rho_j }=e^{ \ln \prod_j  \rho_j^{s_j^\pm }} = \mathsf{gm}_w(\bm{\rho}),
\end{equation}
i.e., the (forward or backward) reaction rate has the functional structure of a (non-normalized) weighted geometric mean.
\end{example}

\subsection{Entropic protocol}

The entropic protocol is governed by a vector field
that is represented by a set of negative (weighted) divergences between local states $x_i$ and connected nodes' states $x_j$, such that
\begin{equation}\label{eq:relentflow}
\dot{x}_i= - \sum_{j\in N_i^+}w_{ij} x_i\ln \frac{x_i}{x_j}.
\end{equation}
The term ``entropic'' refers to the fact that a local vector field \eqref{eq:relentflow} is an entropic quantity. More precisely, it has the structure of a negative relative entropy / information 
divergence between the local state $x_i$ and the adjacent states $x_j$, $j\in N_i^+$.
Relative entropy as divergence from a positive vector $\bm{x}$ to another positive vector $\bm{y}$, both such that their $1$-norms equal one, (i.e., these are
probability mass vectors), is defined as
\begin{equation}\label{eq:relent}
D_{\mathsf{ent}}(\bm{x}||\bm{y}):=\sum_i f_R(x_i|y_i), \ \ \text{where} \ \
f_R(a|b):=a\ln\frac{a}{b},
\end{equation}
see for instance ~\cite{Cover1991}.

The entropic protocol can be formulated as the geometric mean version of the linear consensus protocol using a coordinate transformation, with coordinate transform taken as the scalar function that leads to a least squares variational characterization of the considered mean; for the geometric mean this is the logarithm, while for the arithmetic mean no coordinate transformation is required, see \eqref{eq:amdef} with \eqref{eq:soslog}.

Writing the consensus protocol in logarithmic coordinates leads for each $i \in N$ to the ODE
\begin{align}
\frac{\mathrm{d}}{\mathrm{d}t} \ln x_i= \frac{1}{x_i}\dot{x}_i &=\sum_{j\in N_i^+} w_{ij}(\ln x_j-\ln x_i) \\ \Leftrightarrow \ \ \dot{x}_i &= x_i\sum_{j\in N_i^+} w_{ij}(\ln x_j-\ln x_i),
\end{align}
which is the entropic protocol \eqref{eq:relentflow}, as
\begin{equation}
x_i\sum_{j\in N_i^+} w_{ij}(\ln x_j-\ln x_i)=-\sum_{j\in N_i^+} w_{ij}f_R(x_i,x_j).
\end{equation}



As we shall show, the significance of the entropic protocol arises from the situation that the asymptotically reached consensus value is given by the geometric mean of the initial condition. Hence, 
this protocol provides an analog distributed computation routine to solve the minimization \eqref{eq:varmean} associated to the geometric mean. 



\subsection{Scaling-invariant protocol}
In this section, we introduce the scaling-invariant protocol as an instance of a novel class of network dynamics driven by pairwise metric interactions. 

The scaling-invariant protocol has the form of a LTI consensus system however following log-linear updates; it is given by the component ODE
 \begin{equation}\label{eq:scaleinvprot}
 \dot{x}_i=\sum_{j\in N_i^+}w_{ij}(\ln x_j-\ln x_i), \ \ i \in N.
 \end{equation}
This is an instance of the more general type of mean-driven network protocols given by the class
\begin{equation}\label{eq:metriccons}
\dot{x}_i=\sum_{j\in N_i^+}w_{ij} \mathsf{sgn}(x_j-x_i) d(x_j,x_i),
\end{equation}
where the metric to be chosen is the hyperbolic metric $d_\mathsf{H}$ associated to the variational characterization of the geometric mean, see section \ref{sec:prelim}.

The general mean-driven equation \eqref{eq:metriccons} can be motivated from a system thermodynamic viewpoint; in \cite{Haddad2008} a network protocol is proposed with pairwise interactions of the form $f(x_i,x_j)$, where $f$ is 
locally Lipschitz continuous and assumed to satisfy the condition $(x_i-x_j)f(x_i,x_j)\leq 0$,  $f(x_i,x_j)=0$ if $x_i=x_j$. According to the authors this assumption implies that some sort of energy or information 
flows from higher to lower levels thus this condition is reminiscent of a ``second law''-like inequality in thermodynamics.

We observe that this negativity hypothesis is naturally fulfilled by a metric interaction form as in \eqref{eq:metriccons}:
 for any two states the sign of the terms $(x_i-x_j)$ and $f(x_i,x_j)$ must differ. Hence, for two arguments $x_i,x_j$, $f$ has the sign $\mathsf{sign}(x_j-x_i)$. Therefore, we get the structure $f= \mathsf{sign}(x_j-x_i)f_{\mathrm{res}}$ with residual part required to be positive definite. The choice $f_{\mathrm{res}}=d$ follows naturally.

\begin{example} When the metric chosen in the local ODEs is the Euclidean distance, we recover the linear consensus protocol. Olfati-Saber and Murray's non-linear consensus protocol \cite{MurrayACC2003},
\begin{equation}\label{eq:Murray}
\dot{x}_i=\sum_{j\in N_i^+}w_{ij} \phi(x_j-x_i),
\end{equation}
where $\phi$ is a continuous, increasing function that satisfies $\phi(0)=0$, is 
a subclass of a network dynamics  \eqref{eq:metriccons}. The non-linear interaction in phase averaging, $\phi(\cdot)=\sin(\cdot)$ on the open interval $]-\pi/2, \pi/2[$ is a famous example.
\end{example}


\section{Convergence to consensus \label{sec:results}}
In this section we show global exponential convergence to a consensus configuration 
of the three network protocols driven by the geometric mean, which in summary are given by
\begin{align}
\dot{x}_i&=-\prod_{j\in N_i^-}x_i^{w_{ji}}+\prod_{j\in N_i^+}x_i^{w_{ij}}, \label{eq:1} \\
\dot{x}_i&= -\sum_{j\in N_i^+}w_{ij} x_i\ln\frac{x_i}{x_j}, \ \ \text{and} \label{eq:2} \\
\dot{x}_i&=\sum_{j\in N_i^+}w_{ij}(\ln x_j-\ln x_i).\label{eq:3}
\end{align}
For protocols \eqref{eq:2} and \eqref{eq:3} we characterize the reached consensus value analytically\footnote{An upper and lower bound of the consensus value obtained via protocol \eqref{eq:1} is demonstrated  in the appendix by means of numerical simulations.}.

\subsection{Consensus points and exponential convergence}
To study stability of fixed-points we shall make use of the logarithmic mean, its properties and the mean value theorem:
The logarithmic mean of two positive real numbers $a,b$ is defined as
\begin{equation}
\mathsf{lgm}(a,b):=\frac{a-b}{\ln a -\ln b}.
\end{equation}

The logarithmic mean is positive and symmetric in both arguments, i.e., $\mathsf{lgm}(a,b)=\mathsf{lgm}(b,a)$.
The mean value theorem states that for a continuously differentiable function $f: [a,b]\subseteq \mathbb{R}\to\mathbb{R}$, there exists a $\xi \in [a,b]$ such that
\begin{equation}
\nabla f(\xi)=\frac{f(b)-f(a)}{b-a}.
\end{equation}
With $f=\ln$, we get $\mathsf{lgm}(a,b)=\xi$, where $0<a\leq \xi \leq b$.

The logarithmic mean and its inverse take positive and finite values for positive and finite arguments. 
For approaching positive real arguments we further have
\begin{equation}
\lim_{b\to a} \frac{\ln b -\ln a}{b-a}=\lim_{\epsilon\to 0^+} \frac{\ln (a+\epsilon) -\ln a}{\epsilon}\triangleq
\left.\nabla \ln \xi \right\vert_{\xi=a}=\frac{1}{a},
\end{equation}
so that $\lim_{b\to a}\mathsf{lgm}(a,b)=a>0$.

\begin{theorem}[Convergence to consensus]\label{thm:converg}
Consider  network protocols \eqref{eq:1}-\eqref{eq:3} with initial conditions restricted to $\mathbb{R}^n_{>0}$.
If the underlying digraph is strongly connected,
then protocols \eqref{eq:2} and \eqref{eq:3} converge exponentially fast to a consensus configuration. If in addition the weighting is balanced, then protocol \eqref{eq:1}
converges exponentially fast to a consensus state. In all three cases the equilibrium
$\bar{x}\bm{1}$ has agreement value $\min_{i\in N} x_i(0)< \bar{x} <\max_{i\in N} x_i(0) $.
\end{theorem}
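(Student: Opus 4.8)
The plan is to show that, as long as the trajectory remains in the open positive orthant, each of \eqref{eq:1}--\eqref{eq:3} can be rewritten as a linear time-varying consensus system \eqref{eq:lincons} on the \emph{same} strongly connected topology but with time-varying branch weights $\tilde w_{ij}(t)$, and then to invoke Proposition~\ref{prop:moreau}. The two elementary tools are the mean value theorem in the form $\ln b-\ln a=(b-a)/\mathsf{lgm}(a,b)$ for $a,b>0$ (recorded before the theorem), and, for protocol \eqref{eq:1}, the mean value theorem for the exponential, $e^{b}-e^{a}=e^{\xi}(b-a)$ with $\xi$ between $a$ and $b$.

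Concretely, in \eqref{eq:3} the substitution $\ln x_j-\ln x_i=(x_j-x_i)/\mathsf{lgm}(x_i,x_j)$ gives $\dot x_i=\sum_{j\in N_i^+}\tilde w_{ij}(t)(x_j-x_i)$ with $\tilde w_{ij}(t)=w_{ij}/\mathsf{lgm}\!\big(x_i(t),x_j(t)\big)$. In \eqref{eq:2} one first rewrites $\dot x_i=x_i\sum_{j\in N_i^+}w_{ij}(\ln x_j-\ln x_i)$ and then applies the same substitution, obtaining $\tilde w_{ij}(t)=w_{ij}\,x_i(t)/\mathsf{lgm}\!\big(x_i(t),x_j(t)\big)$. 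In \eqref{eq:1} the balancedness hypothesis yields $\prod_{j\in N_i^-}x_i^{w_{ji}}=x_i^{d_i}=\exp\!\big(\sum_{j\in N_i^+}w_{ij}\ln x_i\big)$ and $\prod_{j\in N_i^+}x_j^{w_{ij}}=\exp\!\big(\sum_{j\in N_i^+}w_{ij}\ln x_j\big)$; applying the exponential mean value theorem and then the logarithmic-mean identity gives $\dot x_i=\sum_{j\in N_i^+}\tilde w_{ij}(t)(x_j-x_i)$ with $\tilde w_{ij}(t)=e^{\xi_i(t)}w_{ij}/\mathsf{lgm}\!\big(x_i(t),x_j(t)\big)$, where $\xi_i(t)$ lies between $d_i\ln x_i(t)$ and $\sum_{j\in N_i^+}w_{ij}\ln x_j(t)$. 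In all three cases the coefficients $\tilde w_{ij}(\cdot)$ are positive and continuous in $t$ while $\bm x(t)\in\mathbb{R}^n_{>0}$ (the removable-singularity limits $\lim_{b\to a}\mathsf{lgm}(a,b)=a$ and $\lim_{b\to a}(e^b-e^a)/(b-a)=e^a$ handle the coincident-argument case), and the branch set of $(N,B,\tilde w(t))$ equals that of the original graph.

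On the maximal interval on which $\bm x(t)\in\mathbb{R}^n_{>0}$ the rewritten dynamics is a genuine linear consensus system $\dot{\bm x}=-\tilde{\mathbf L}(t)\bm x$, so $t\mapsto\min_i x_i(t)$ is nondecreasing and $t\mapsto\max_i x_i(t)$ is nonincreasing; hence $\bm x(t)$ never leaves the compact box $K=[m,M]^n$ with $m:=\min_i x_i(0)>0$ and $M:=\max_i x_i(0)$, which rules out finite-time escape and reaching the boundary of the positive orthant, so the maximal interval is $[0,\infty)$. On $K$ one has $m\le\mathsf{lgm}(x_i,x_j)\le M$, $x_i\in[m,M]$, and $\xi_i\in[d_i\ln m,\,d_i\ln M]$, so there exist constants $0<\delta\le\Delta<\infty$ with $\delta\le\tilde w_{ij}(t)\le\Delta$ for all $t\ge0$ and all $(j,i)\in B$. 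Since the topology is the fixed strongly connected graph and every branch carries weight at least $\delta$ at every instant, the auxiliary graph in Proposition~\ref{prop:moreau} coincides with $(N,B,\cdot)$ (take any horizon $T$ and threshold $\delta T$) and is therefore uniformly connected; Proposition~\ref{prop:moreau} then gives global exponential convergence of \eqref{eq:2}, \eqref{eq:3}, and --- under balancedness --- \eqref{eq:1}, to a consensus point $\bar x\bm 1$, which is indeed an equilibrium of each protocol. The bounds $m\le\bar x\le M$ are immediate from the monotonicity of $\min_i x_i$ and $\max_i x_i$; strictness for a nonconstant $\bm x(0)$ follows because, by strong connectivity, a node sitting strictly above the running minimum forces every node it influences strictly above the minimum, and iterating along directed paths exhausts $N$, contradicting the minimum remaining at $m$ --- symmetrically for the maximum.

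The delicate point is precisely this bootstrapping: the linear-consensus rewriting, and especially the \emph{uniform} lower bound $\delta>0$ on the transformed weights, presuppose that $\bm x(t)$ stays in a compact subset of the open positive orthant, while that confinement is itself deduced from the rewriting. Making the argument non-circular requires working on the maximal positivity interval, deriving forward invariance of $K$ there, and only then concluding that this interval is $[0,\infty)$; one must also check that $t\mapsto\mathsf{lgm}(x_i(t),x_j(t))$ and $t\mapsto e^{\xi_i(t)}$ are admissible (here, continuous) coefficient functions in the sense of Proposition~\ref{prop:moreau}, which is where the limit computations recorded just before the theorem are used. A minor secondary point is that for protocol \eqref{eq:1} the balancedness hypothesis is genuinely needed --- it is what lets the two products be combined into $\exp$ of a weighted difference of logarithms so that the $\xi_i$ trick applies.
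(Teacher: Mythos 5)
Your proof is correct and follows essentially the same route as the paper: rewriting each protocol as a linear time-varying consensus system via the logarithmic-mean identity $\ln x_j-\ln x_i=(x_j-x_i)/\mathsf{lgm}(x_i,x_j)$ (your exponential mean-value factor $e^{\xi_i}$ is exactly the paper's diagonal term $\mathsf{lgm}(r_i^+,r_i^-)$), establishing positivity of the ``virtual'' weights bounded away from zero, and invoking Proposition~\ref{prop:moreau}; your explicit maximal-interval/compact-box argument is a slightly more careful version of the paper's appeal to invariance of $\mathbb{R}^n_{>0}$ under the stochastic flow map, and your min/max monotonicity step matches the paper's Lyapunov argument for the bounds on $\bar{x}$.
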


\begin{proof}
We start with \eqref{eq:1} from where the two other cases shall follow.
As $r_i^+$ and $r_i^-$, as defined in \eqref{eq:rip} and \eqref{eq:rim}, are positive, we can expand the protocol \eqref{eq:1} with the logarithm of these rates, so that,
\begin{align}
\dot{x}_i& = r_i^+-r_i^-=\frac{r_i^+-r_i^-}{\ln r_i^+ - \ln r_i^-} \left(\ln r_i^+ -\ln r_i^-\right) \\
&= \mathsf{lgm}(r_i^+,r_i^-) \left( \sum_{j\in N_i^+}w_{ij}\ln x_j - \sum_{j\in N_i^-} w_{ji} \ln x_i\right) \label{eq:dynlg}\\
&= \mathsf{lgm}(r_i^+,r_i^-)  \sum_{j\in N_i^+}w_{ij}\left(\ln x_j-\ln x_i \right)  \label{eq:dynlg2}.
\end{align}
In going from \eqref{eq:dynlg} to \eqref{eq:dynlg2} we made use of balancedness of 
the weighting, so that $\sum_{j\in N_i^-} w_{ji}=\sum_{j\in N_i^+} w_{ij}$.
Expanding 
the pairwise interactions by local pairwise state differences yields
\begin{equation}\label{eq:proof1}
\dot{x}_i=\mathsf{lgm}(r_i^+,r_i^-) \sum_{j\in N_i^+}w_{ij} \frac{\ln x_j-\ln x_i}{x_j-x_i}  \left( x_j- x_i\right), \ \ i\in N.
\end{equation}
Define the matrix $\textbf{L}_X(\bm{x}(t))$,
\begin{equation}\label{eq:com_laplacian}
[\textbf{L}_X]_{ij}:=
\left\{
\begin{array}{cc}
-w_{ij} \mathsf{lgm}^{-1}(x_j,x_i), & \text{if} \ \ j\not =i,\\
\sum_{j\in N_i^+}   w_{ij} \mathsf{lgm}^{-1}(x_j,x_i), & j=i, i\in N, 
\end{array}\right.
\end{equation}
and
$\textbf{R}:=\mathsf{diag}\{\mathsf{lgm}(r_1^+,r_1^-),\mathsf{lgm}(r_2^+,r_2^-),\ldots, \mathsf{lgm}(r_n^+,r_n^-)\}$. 

Then, we get the vector-matrix representation for the polynomial ODE system,
\begin{equation}\label{eq:linform3}
\dot{\bm{x}}=-\textbf{R}(\bm{x})\textbf{L}_X(\bm{x})\bm{x}.
\end{equation}

Next we show that for positive initial conditions the flow generated by the ODE system \eqref{eq:linform3} is well defined for all time:
For $\bm{x}(0)\in \mathbb{R}_{>0}^n$, the matrix $\textbf{L}_X(\bm{x}(0))$ by definition is a Laplacian matrix with finite, non-negative and real off-diagonal elements, as the branch weights are non-negative and the logarithmic mean of positive, real and finite arguments is positive, real and finite. This follows from the mean value theorem: For $x_i,x_j \in \mathbb{R}_{>0}$, $\mathsf{lgm}^{-1}(x_i,x_j)=\frac{1}{\xi}>0$, as $\xi$ is a value within the interval spanned by the positive real numbers $x_i$ and $x_j$. Hence for positive initial condition one can always find a threshold $\delta_X$, such that $\mathsf{lgm}^{-1}(x_i(0),x_j(0))\geq\delta_X>0$.
\noindent
The diagonal matrix $\textbf{R}(\bm{x}(0))$ is positive definite, as for positive initial conditions $r_i^+$ and $r_i^-$ are positive, so that $\mathsf{lgm}(r_i^+,r_i^-)>0$ as well, with value in between the two rates, again by the mean value theorem. Hence, with positive initial condition, one can always find a lower bound $\delta_R>0$, such that $\mathsf{lgm}(r_i^+,r_i^-)\vert_{t=0}\geq \delta_R>0$.
\noindent
Therefore, the matrix $\textbf{R}(\bm{x}(0))\textbf{L}_X(\bm{x}(0))$ is a Laplacian matrix characterizing a ``virtual'' graph with non-negative finite entries, and non-trivial ``virtual'' branch weights that are bounded away from zero by a threshold value $\delta$ such that $ \delta \geq \min_{(j,i)\in 
B}\{w_{ij}\}\cdot \delta_R\cdot\delta_X >0$. Hence, at $t=0$ the polynomial ODE system defines a consensus network.
\noindent
By definition, the flow map of a consensus system is a stochastic matrix, which is a positive
monotone map that leaves $\mathbb{R}_{> 0}^n$ invariant, cf., e.g., \cite{Sepulchre2010} for this monotonicity fact in consensus theory.
Thus, trajectories starting in $\mathbb{R}_{> 0}^n$ will remain in this set, so that $[\textbf{R}\textbf{L}_X](\bm{x}(t))$ is well-defined for all $t\geq 0$, and it characterizes a linear time-varying consensus network, where the variability of ```virtual'' branch weights is endogenously 
determined as function of state trajectories, which in turn are parameterized by time as free parameter.

As the graph $\mathsf{G}$ on which the protocols run is strongly connected by hypothesis, the ``virtual'' graph associated to the dynamic Laplacian $[\textbf{R}\textbf{L}_X](\cdot)$ is 
uniformly connected at each time instant for all $\bm{x} \in \mathbb{R}_{>0}^n$.
Therefore, the polynomial network protocol converges globally and exponentially to a consensus configuration $\bar{\bm{x}}\in \mathsf{span}\{\bm{1}\}$, according to Proposition \ref{prop:moreau}.

Now we consider protocol \eqref{eq:2} and relax the constraint of balanced to arbitrary weighting of the strongly connected graph.
Define the matrix $\textbf{X}(\bm{x}):=\mathsf{diag}\{x_1,x_2,\ldots,x_n\}$. 
Protocol \eqref{eq:2} can be written as
\begin{align}
\dot{x}_i&=x_i \sum_{j\in N_i^+}w_{ij}\mathsf{lgm}^{-1}(x_i,x_j)(x_j-x_i) \\
  \Leftrightarrow \ \ \dot{\bm{x}}&=-\textbf{X}(\bm{x})\textbf{L}_X(\bm{x})\bm{x}. \label{eq:XLx}
\end{align}
The matrix $\textbf{X}\textbf{L}_X$ is a Laplacian matrix for all parameterizations, by the same arguments as before, so that also the 
entropic protocol \eqref{eq:2} converges to a consensus configuration with exponential speed on the positive orthant.

The last protocol \eqref{eq:3} can be written as
\begin{equation}\label{eq:3proof}
\dot{\bm{x}}=-\textbf{L}_X(\bm{x}(t))\bm{x},
\end{equation}
which again is a linear time-varying consensus system with endogenously determined variability of the weighting. Hence, the system converges to consensus with exponential speed on the positive orthant, as well.

Let us turn to the last statement regarding the exponentially fast reached consensus value.
All three non-linear protocols can be brought to a linear consensus form on a dynamically weighted but strongly connected ``virtual'' graph. By standard linear consensus theory, the 
function $\max_{i\in N}x_i-\min_{i\in N}x_i$ is a (strict) Lyapunov function \cite{Moreau2005}.
Hence, the maximal state value is decreasing and the minimal state value is increasing, so that the consensus value must lie in between the initial maximum and minimum state values.
\end{proof}

This proof technique is of interest in its own right: we make use of the Laplacian structure arising from (algebraic) interconnections on a graph in shifting non-linearity associated to nodes to a 
non-linearity in pairwise interactions across branches, leading to a ``virtual'' dynamic graph 
on which the non-linear time-invariant network dynamics appears as linear time-varying consensus system.


\begin{remark}[Time-varying graphs and uniform connectedness]
We note that the transformations in the proof of convergence do not rely on time-invariant weightings. This suggests that convergence to consensus should take place also under the weaker assumption of uniform graph connectivity. 
\end{remark}

In the following result we analytically characterize the consensus value of
 the entropic consensus network.

\begin{theorem}[Weighted geometric mean consensus]\label{thm:wgm}
Consider a weighted digraph
that is strongly connected, with left eigenvector of the associated Laplacian $\textbf{L}$, $\bm{q}\in \mathbb{R}^n_{>0}$, such that $\bm{q}^\top\textbf{L}=\bm{0}$.
Then, the consensus dynamics \eqref{eq:2} starting at any $\bm{x}(0)\in \mathbb{R}^n_{>0}$ asymptotically reaches a fixed point $\bar{x}\bm{1}$,
with 
\begin{equation}
\bar{x}=\mathsf{gm}_w(\bm{x}(0))=\prod_{i=1}^nx_i^{\hat{q}_i}(0),
\end{equation}
where $\hat{\bm{q}}:=\bm{q}/|\bm{q}|_1$, is the Perron vector of $\textbf{L}$.
\end{theorem}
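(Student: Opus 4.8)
The plan is to exploit the exact linearization of the entropic protocol in logarithmic coordinates that was already observed in Section~\ref{sec:meandyn}. Set $y_i := \ln x_i$; this is well defined for all $t\geq 0$ because Theorem~\ref{thm:converg} guarantees that a trajectory started in $\mathbb{R}^n_{>0}$ remains in $\mathbb{R}^n_{>0}$ for all time. Differentiating and using $\dot{x}_i/x_i = -\sum_{j\in N_i^+} w_{ij}\ln(x_i/x_j)$ gives $\dot{y}_i = \sum_{j\in N_i^+} w_{ij}(y_j - y_i)$, i.e. $\dot{\bm{y}} = -\textbf{L}\bm{y}$ with the \emph{original, constant} Laplacian $\textbf{L}$ — not a state-dependent ``virtual'' one as in the proof of Theorem~\ref{thm:converg}. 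So in logarithmic coordinates the entropic protocol is literally a classical linear consensus system on the given strongly connected digraph.

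From here I would invoke two standard facts about $\dot{\bm{y}} = -\textbf{L}\bm{y}$ on a strongly connected digraph. First, the scalar quantity $\bm{q}^\top\bm{y}$ is conserved, since $\frac{\mathrm{d}}{\mathrm{d}t}\bm{q}^\top\bm{y} = -\bm{q}^\top\textbf{L}\bm{y} = 0$ by the defining property $\bm{q}^\top\textbf{L} = \bm{0}$. Second, $\bm{y}(t)$ converges to a consensus vector $\bar{y}\bm{1}$; this is immediate from Theorem~\ref{thm:converg} applied to the entropic protocol together with continuity of $\ln$ on $\mathbb{R}_{>0}$ (alternatively, directly from linear consensus theory). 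Combining the two facts, $\bm{q}^\top\bm{y}(0) = \bm{q}^\top(\bar{y}\bm{1}) = \bar{y}\,(\bm{q}^\top\bm{1}) = \bar{y}\,|\bm{q}|_1$, where the last equality uses $\bm{q}\in\mathbb{R}^n_{>0}$. Hence $\bar{y} = \hat{\bm{q}}^\top\bm{y}(0) = \sum_{i=1}^n \hat{q}_i\ln x_i(0)$.

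Exponentiating back, $\bar{x} = e^{\bar{y}} = \prod_{i=1}^n x_i(0)^{\hat{q}_i} = \mathsf{gm}_w(\bm{x}(0))$, which is the assertion; positivity of $\bar{x}$ is automatic and is consistent with the bracketing bounds $\min_i x_i(0) < \bar{x} < \max_i x_i(0)$ from Theorem~\ref{thm:converg}. The only point requiring a little care is the bookkeeping after the change of coordinates: one must check that the logarithmic substitution removes the nonlinearity \emph{globally} and reproduces exactly the constant $\textbf{L}$ — so that its left Perron eigenvector $\bm{q}$, rather than some time-varying analogue, governs the conserved functional — and that $\bm{y}(t)$ stays finite, which is precisely the invariance of $\mathbb{R}^n_{>0}$ established inside the proof of Theorem~\ref{thm:converg}. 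Beyond that, the argument is a direct transcription of the well-known left-Perron-eigenvector weighting of the consensus value in linear consensus theory.
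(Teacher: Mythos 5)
Your proof is correct and follows essentially the same route as the paper: both pass to logarithmic coordinates where the entropic protocol becomes the linear consensus system $\frac{\mathrm{d}}{\mathrm{d}t}\ln\bm{x}=-\textbf{L}\ln\bm{x}$ with the constant Laplacian, then combine the conserved quantity $\bm{q}^\top\ln\bm{x}$ with convergence to a uniform state (from Theorem~\ref{thm:converg}) and exponentiate. The only cosmetic difference is that the paper reaches the log-linear form via the identity $\textbf{L}_X\bm{x}=\textbf{L}\ln\bm{x}$ and the matrix $\textbf{X}^{-1}$, whereas you substitute $\bm{y}=\ln\bm{x}$ directly, which is the same computation.
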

\begin{proof}
The proof concerning convergence to consensus is analogous to the proof of Theorem \ref{thm:converg}, where the non-linear time-invariant 
system of equations is transformed to a linear time-varying consensus form, with endogenously determined variability of the branch weights.
In particular, let us start from the linear representation of the protocol in vector matrix form \eqref{eq:XLx}, which can be re-written as
\begin{equation}
\textbf{X}^{-1}(\bm{x})\dot{\bm{x}}=\textbf{L}_X(\bm{x})\bm{x} \Leftrightarrow \frac{\mathrm{d}}{\mathrm{d}t}\ln \bm{x} = \textbf{L} \ln 
\bm{x},
\end{equation}
as $\frac{\mathrm{d}}{\mathrm{d}t}\ln x(t)= \frac{1}{x}\dot{x}$ and the Laplacian structure allows to shift the non-linearity from inverted logarithmic mean components in the weightings to logarithmic coordinates at nodes such that
\begin{equation}\label{eq::coordinate_trans}
\textbf{L}_X\bm{x}=\textbf{L}\ln\bm{x}. 
\end{equation}
Note that the inverse $\textbf{X}^{-1}$ exists, as it is a diagonal matrix having positive real diagonal elements.

Next we prove that the weighted geometric mean is the consensus value.
By hypothesis, $\bm{q}$ is in the left kernel of $\textbf{L}$, so that $\bm{q}^\top \textbf{L}\ln \bm{x}=0$.  Equivalently, 
\begin{equation}\label{eq:invprop}
\frac{\mathrm{d}}{\mathrm{d}t} [\bm{q}^\top\ln\bm{x}(t)]=0 \Rightarrow \bm{q}^\top\ln\bm{x}(0)=
\sum_{i=1}^n q_i \ln x_i(t)=\text{const}.
\end{equation}
Using the fact that for $t\to\infty$ a uniform state is reached, together with basic arithmetics for the logarithm,
the invariance property \eqref{eq:invprop} implies that
\begin{align}
\sum_{i=1}^n q_i \ln \bar{x}&=\sum_{i=1}^n q_i \ln x_i(0) \\
\Leftrightarrow \ \ \ln \bar{x}&=
\frac{1}{\sum_{i=1}^n q_i }\sum_{i=1}^n q_i \ln x_i(0)=\ln \prod_{i=1}^n x_i(0)^{\hat{q}_i}.
\end{align}
Solving for the consensus value yields,
\begin{equation}
\bar{x}=\exp\left( \ln \prod_{i=1}^n x_i(0)^{\hat{q}_i} \right)  \triangleq \mathsf{gm}_w(\bm{x}(0)),
\end{equation}
which completes the proof.
\end{proof}

\begin{corollary}\label{cor:wam}
Consider the scaling invariant protocol \eqref{eq:3} in the setting described in Theorem \ref{thm:wgm}.
The asymptotically reached consensus value is the weighted arithmetic mean of the initial 
condition with weights given by the components of the Perron vector, i.e., $\bar{x}=\sum_{i=1}^n \hat{q}_i x_i(0)$.
\end{corollary}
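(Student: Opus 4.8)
The plan is to mirror the argument of Theorem~\ref{thm:wgm}, but to track a conserved quantity in the \emph{linear} coordinates rather than in logarithmic ones. First I would write the scaling-invariant protocol \eqref{eq:3} directly in vector--matrix form: since $\dot{x}_i=\sum_{j\in N_i^+}w_{ij}(\ln x_j-\ln x_i)$ is precisely the $i$-th component of $-\textbf{L}\ln\bm{x}$, the protocol reads
\begin{equation}
\dot{\bm{x}}=-\textbf{L}\ln\bm{x},
\end{equation}
equivalently the form $\dot{\bm{x}}=-\textbf{L}_X(\bm{x})\bm{x}$ of \eqref{eq:3proof} together with the coordinate identity \eqref{eq::coordinate_trans}. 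By Theorem~\ref{thm:converg} this system converges exponentially, from any $\bm{x}(0)\in\mathbb{R}^n_{>0}$, to some consensus state $\bar{x}\bm{1}$.

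Next I would exhibit the relevant invariant. Multiplying the dynamics from the left by $\bm{q}^\top$ and using the hypothesis $\bm{q}^\top\textbf{L}=\bm{0}$ gives
\begin{equation}
\frac{\mathrm{d}}{\mathrm{d}t}\bigl(\bm{q}^\top\bm{x}(t)\bigr)=-\bm{q}^\top\textbf{L}\ln\bm{x}(t)=0,
\end{equation}
so the linear functional $\bm{q}^\top\bm{x}=\sum_{i=1}^n q_i x_i$ is conserved along trajectories; in particular $\bm{q}^\top\bm{x}(t)=\bm{q}^\top\bm{x}(0)$ for all $t\geq 0$. Passing to the limit $t\to\infty$ and evaluating this conserved quantity at the consensus state yields $\bm{q}^\top(\bar{x}\bm{1})=\bar{x}\sum_{i=1}^n q_i=\bm{q}^\top\bm{x}(0)$, whence
\begin{equation}
\bar{x}=\frac{1}{\sum_{i=1}^n q_i}\sum_{i=1}^n q_i x_i(0)=\sum_{i=1}^n\hat{q}_i x_i(0)=\mathsf{am}_w(\bm{x}(0)),
\end{equation}
the claimed weighted arithmetic mean with weights the components of the Perron vector $\hat{\bm{q}}=\bm{q}/|\bm{q}|_1$.

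There is no real obstacle: convergence is inherited verbatim from Theorem~\ref{thm:converg}, and the only structural observation needed is that, unlike the entropic protocol \eqref{eq:2}---where the relation $\bm{q}^\top\textbf{L}\ln\bm{x}=0$ makes $\bm{q}^\top\ln\bm{x}$ invariant and hence produces a geometric mean---for \eqref{eq:3} the same left-kernel relation annihilates the right-hand side in the state coordinates themselves, producing a \emph{linear} invariant and hence an arithmetic mean. One need only note in passing that $\sum_{i=1}^n q_i>0$ because $\bm{q}\in\mathbb{R}^n_{>0}$, so the division above is legitimate.
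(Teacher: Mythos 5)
Your proposal is correct and follows essentially the same route as the paper: the paper's one-line proof rests exactly on the observation that $\sum_{i=1}^n \hat{q}_i x_i(t)$ is invariant along \eqref{eq:3}, which you derive by left-multiplying $\dot{\bm{x}}=-\textbf{L}\ln\bm{x}$ with $\bm{q}^\top$ and combining with the convergence guaranteed by Theorem~\ref{thm:converg}. Your write-up merely spells out the details (positivity of $\sum_i q_i$, passing to the limit at the consensus state) that the paper leaves implicit.
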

\begin{proof}
The proof follows from noting that $\sum_{i=1}^n \hat{q}_i x_i(t)$ remains invariant along the dynamics.
\end{proof}

Regarding the asymptotically reached agreement value of the polynomial consensus protocol the maximum and minimum initial state values provide upper and lower bounds by standard linear consensus theory.
So far we could not analytically derive tighter results. However, comprehensive numerical simulations, see appendix, suggest that the consensus value is upper bounded by the arithmetic mean of the initial condition and lower bounded by the arithmetic geometric mean of the arithmetic mean and the geometric mean of the initial state. This value is related to the solution of an elliptic integral. The proof of this conjecture is subject to future work. For further information we refer the interested reader to the appendix and references stated therein.


\begin{figure*}
\begin{subfigure}[b]{0.36\textwidth}
 \begin{subfigure}[b]{\textwidth}
\resizebox{1\textwidth}{!}{\begin{tikzpicture}[>=stealth',shorten >=1pt, node distance=3cm, on grid,initial/.style={}]
\tikzstyle{every state}=[fill=mycolor_lightblue]
  \node[state]          (3)                        {$\mathbf{3}$};
  \node[state]          (2) [above left =of 3]    {$\mathbf{2}$};
  \node[state]          (1) [left =of 2]    {$\mathbf{1}$};
  \node[state]          (4) [above right =of 3]    {$\mathbf{4}$};
 \node[state]          (5) [right=of 4]    {$\mathbf{5}$};
\tikzset{mystyle/.style={->,bend right=16, draw=mycolor_green, double=mycolor_green}}
 \path (2)     edge [mystyle]    node [above]  {$2$} (1)
       (2)     edge [mystyle]    node [sloped, above]  {$2$} (3)
       (4)     edge [mystyle]    node  [above] {$1$} (3)
       (5)     edge [mystyle]    node  [above] {$1$} (4);
\tikzset{mystyle/.style={->, bend right=16, draw=mycolor_green, double=mycolor_green}}   
\path (3)     edge [mystyle]    node  [sloped, above] {$3$} (4)
      (1)     edge [mystyle]    node  [above] {$1$} (2)
      (1)     edge [mystyle]    node  [sloped, below] {$2$} (3)    
      (3)     edge [mystyle]    node  [sloped, above] {$3$} (2)     
      (4)     edge [mystyle]    node  [above] {$4$} (5); 
\tikzset{mystyle/.style={->, bend left=16,draw=mycolor_green, double=mycolor_green}}       
\path (5)     edge [mystyle]    node  [sloped, below] {$3$} (3);
\tikzset{mystyle/.style={->, draw=mycolor_green, double=mycolor_green}}
\path (3)     edge [mystyle]    node [sloped, above]  {$1$} (1)
      (3)     edge [mystyle]    node  [above ] {$1$} (5);
 \end{tikzpicture}}
\caption{ }\label{fig:comnetwork_b}
\end{subfigure}\\
\begin{subfigure}[b]{\textwidth}
\resizebox{1\textwidth}{!}{\begin{tikzpicture}[>=stealth',shorten >=1pt,node distance=3cm, on grid,initial/.style={}]
\tikzstyle{every state}=[fill=mycolor_lightblue]
  \node[state]          (4)                        {$\mathbf{4}$};
  \node[state]          (2) [above left =of 4]    {$\mathbf{2}$};
  \node[state]          (1) [left =of 2]    {$\mathbf{1}$};
  \node[state]          (3) [above right =of 4]    {$\mathbf{3}$};
 \node[state]          (5) [right=of 3]    {$\mathbf{5}$};
\tikzset{mystyle/.style={->,bend right=16, draw=mycolor_green, double=mycolor_green}}
\path (2)     edge [mystyle]    node [above]  {$2$} (1)
      (1)     edge [mystyle]    node [above]  {$3$} (2)
      (4)     edge [mystyle]    node [above]  {$2$} (3)
      (4)     edge [mystyle]    node [above]  {$2$} (5)
      (3)     edge [mystyle]    node [above]  {$2$} (4);
\tikzset{mystyle/.style={->, draw=mycolor_green, double=mycolor_green}}
\path (1)     edge [mystyle]    node [sloped, above]  {$1$} (4)
      (2)     edge [mystyle]    node  [above ] {$5$} (4)
      (3)     edge [mystyle]    node  [above ] {$4$} (2)
      (5)     edge [mystyle]    node [above]  {$3$} (4);
 \end{tikzpicture}}
\caption{ }\label{fig:comnetwork_u}
\end{subfigure}
\end{subfigure}
 \begin{subfigure}[b]{0.32\textwidth}
                \psfrag{t}[cc][cc]{$t$} \psfrag{x}[cc][cc]{\small{$x_i$}}
                \psfrag{am}[cc][rr]{\scriptsize{$\mathsf{am}(\bm{x}(0))$}} \psfrag{gm}[cc][rr]{\scriptsize{$\mathsf{gm}(\bm{x}(0))$}}  
\includegraphics[scale=0.4]{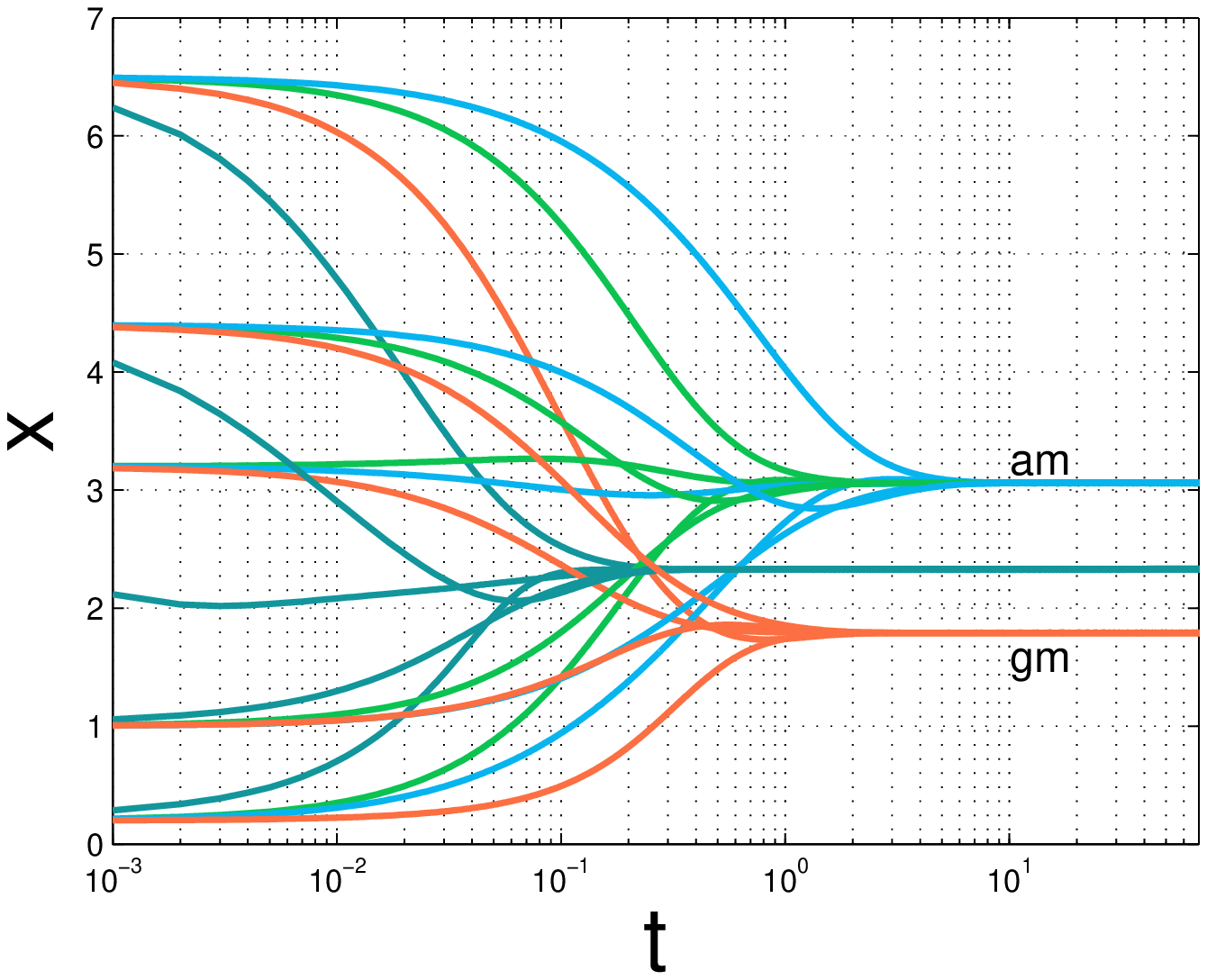}
 \caption{}\label{fig:tsts}
 \end{subfigure}
 \begin{subfigure}[b]{0.32\textwidth}       
  \psfrag{t}[cc][cc]{$t$} \psfrag{x}[cc][cc]{\small{$x_i$}}
  \psfrag{am}[cc][rr]{\scriptsize{$\mathsf{am}_w(\bm{x}(0))$}} \psfrag{gm}[cc][rr]{\scriptsize{$\mathsf{gm}_w(\bm{x}(0))$}} 
\includegraphics[scale=0.4]{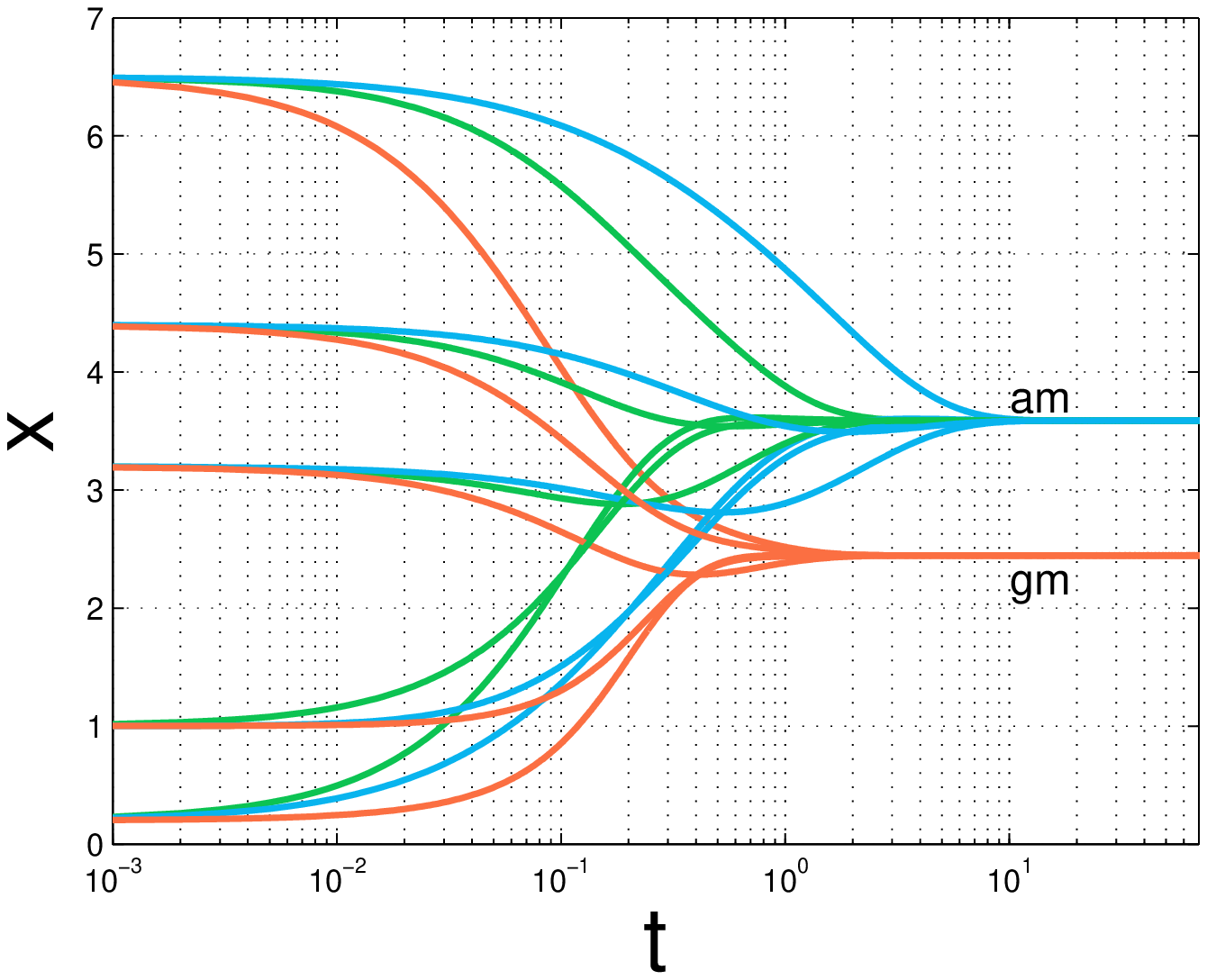} 
\caption{}\label{fig:tsts0k2}
 \end{subfigure}
 \caption{(a) depicts the underlying strongly connected balanced digraph, (b) the underlying strongly connected non-balanced graph, (c) and (d) show component trajectories for polynomial (teal), 
entropic (orange), scaling-invariant (blue), and standard linear consensus protocol (green) on graph  (a) and (b), respectively.}
\end{figure*}

\subsection{Numerical examples}
First, we compare the protocols of polynomial type \eqref{eq:1}, of entropic type \eqref{eq:2} and the scaling-invariant one 
\eqref{eq:3} for a digraph given in Fig.~\ref{fig:comnetwork_b}. This digraph is strongly connected and has balanced branch weights.  

For each of these protocols we compute trajectories starting at $\bm{x}(0)=[6.5,0.2,3.2,1,4.4]$. In accordance to Theorem \ref{thm:converg}, the novel network protocols are 
indeed consensus protocols that converge to a uniform equilibrium state $\bar{x}\bm{1}$. 
As the left-Perron vector for the balanced weighting is a uniform vector, the LTI consensus system must solve the average consensus problem with 
$\bar{x}=\mathsf{am}(\bm{x}(0))=3.06$,
the scaling-invariant protocol, according to Corollary~\ref{cor:wam}, as well, and
solution curves of the entropic protocol must converge to the geometric mean of the initial state, $\mathsf{gm}(\bm{x}(0))=1.7886=\bar{x}$, as shown in Theorem~\ref{thm:wgm}.
Our observations are confirmed by Fig.~\ref{fig:tsts}.

Next, let us illustrate the results in Theorem~\ref{thm:wgm} and Corollary~\ref{cor:wam} on a digraph which is strongly connected but not \mbox{balanced}. We consider a weighted digraph described in 
Fig.~\ref{fig:comnetwork_u}, which has Perron vector~\mbox{$\hat{\bm{q}}=[0.26,0.14,0.37,0.09,0.14]$}. The 
weighted arithmetic mean of the same initial condition using the Perron vector components as weights is~\mbox{$\mathsf{am}_w(\bm{x}(0))=3.5884$}, 
and the weighted geometric mean becomes~\mbox{$\mathsf{gm}_w(\bm{x}(0))=2.4444$}. 
Again, the simulation results as depicted in Fig. \ref{fig:tsts0k2} confirm our observations.

\section{Gradient and optimization viewpoint\label{sec:energetics}}

In this section we demonstrate that all three geometric mean driven consensus networks can be embraced in a common setting of a projected gradient flow of free energy. On that basis we provide a novel characterization of the geometric mean in terms of a constrained optimization problem.
\subsection{Free energy gradient flow}
Free energy stored in a state $\bm{x}$ w.r.t. another positive vector $\bm{y}$ can be defined as the
sum-separable function \cite{vdSchaft2013}
\begin{equation}\label{eq:fe}
F(\bm{x}||\bm{y}):=\sum_{i=1}^n x_i\left(\ln\frac{x_i}{y_i}-1\right) + const.
\end{equation}
For elements that are member of the set of vectors having total mass $m\in \mathbb{R}_{>0}$,
\begin{equation}
\mathscr{D}_m:=\left\{\bm{x}\in \mathbb{R}^n_{>0}:\sum_{i=1}^nx_i=m \right\},
\end{equation}
free energy is, up to an additive constant, a relative entropy; it coincides with the usual relative entropy known in information theory for vectors that are elements of the set of probability distribution vectors, $\mathscr{D}_{m=1}$, by setting $const=1$, so that
$F(\bm{x}||\bm{y})=\sum_{i=1}^n x_i\ln\frac{x_i}{y_i}$.
\begin{remark}
Within the literature on network systems relative entropy appears in the context of distributed estimation and detection algorithms, where the states represent discrete probabilities, see, e.g., \cite{ChungACC2014,QLiu2015, Zidek1986, NedicArxiv2015,JadbabaieCDC2013,JadbabaieScDir2015}.  Free energy is used in the study on mass-action chemical reaction networks \cite{vdSchaft2013}.
\end{remark}

In what follows we show that the polynomial, entropic, and scaling-invariant consensus dynamics are all instances of a particular type of a free energy gradient flow. 

To start with, an ODE governing a Riemannian gradient (descent) flow in $\mathbb{R}^n$ has the generic form
$\textbf{G}(\bm{x})\dot{\bm{x}}=-\nabla E(\bm{x})$
where $E:\mathbb{R}^n\to\mathbb{R}$ is the potential and $\textbf{G}:\mathbb{R}^n\to\mathbb{R}^{n\times n}$ is a positive definite matrix function smoothly varying in $\bm{x}$.
It defines the infinitesimal metric $\mathrm{d}\bm{x}\cdot\textbf{G}(\bm{x})\mathrm{d}\bm{x}$ in which a system is a gradient descent flow 
of $E$, so that $\textbf{G}^{-1}$  defines the inverse metric, cf., e.g., \cite{SimpsonPorco2014}.

Let $\textbf{L}$ be the symmetric Laplacian of an undirected connected graph. Using the eigen-decomposition
 $\textbf{L}=\textbf{V}\bm{\Lambda}\textbf{V}^\top$, where $\bm{\Lambda}:=\mathsf{diag}\{\lambda_1,\lambda_2,\ldots,\lambda_n\}$ is the diagonal matrix collecting eigenvalues of $\textbf{L}$, and $\textbf{V}$ collects orthogonal eigenvectors each having 2-norm one, we have
\begin{equation}\label{eq:projL}
\textbf{L}\bm{x}=\textbf{V}\bm{\Lambda}\textbf{V}^\top\bm{x}=\sum_{i=1}^{n}\bm{v}_i\, \lambda_i \bm{v}_i\cdot\bm{x},
\end{equation}
which is a projection of a vector $\bm{x}$ onto the set of distributions $\mathscr{D}_m, m=|\bm{x}|_1$.
To see this, recall that
a projection onto this set has the form
\begin{equation}\label{eq:proj}
\mathsf{Proj}_{\mathscr{D}} \bm{x}=\sum_{i=1}^{n-1} \frac{\bm{x}\cdot \tilde{\bm{v}}_i}{\tilde{\bm{v}}_i\cdot \tilde{\bm{v}}_i} \tilde{\bm{v}}_i,
\end{equation}
where $\{\tilde{\bm{v}}_1,\tilde{\bm{v}}_2,\ldots,\tilde{\bm{v}}_{n-1}\}$ are linearly independent vectors that span the hyperplane $\mathscr{D}_m$. This setting is given in \eqref{eq:projL}, as $\lambda_1=0$, while $\lambda_i>0, i=2,3,\ldots,n$, and $\bm{v}_1$ is orthogonal to any set $\mathscr{D}_m$. 

Given a sum-separable convex function $\phi:\mathbb{R}^n_{>0}\to\mathbb{R}$ we introduce for the gradient $\nabla \phi$ projected onto $\mathscr{D}_m, m=|\nabla \phi|_1$, the notation $\nabla_{\mathscr{D}}\phi=\textbf{L}\nabla \phi$.
Observe that the gradient  $\nabla F(\bm{x}||\bm{1})$ is given by the vector $\ln\bm{x}$. 
Using the projected gradient notation, we can write
the protocols \eqref{eq:1}-\eqref{eq:3} in same order in vector matrix form as
\begin{align}
\dot{\bm{x}}&=-\textbf{R}(\bm{x})\textbf{L}\ln\bm{x} =-\textbf{R}(\bm{x})\nabla_{\mathscr{D}} F(\bm{x}||\bm{1}) \label{eq:1g}\\
\dot{\bm{x}}&=-\textbf{X}(\bm{x}) \textbf{L}\ln\bm{x}=-\textbf{X}(\bm{x})\nabla_{\mathscr{D}} F(\bm{x}||\bm{1}) \label{eq:2g}\\
\dot{\bm{x}}&=-\textbf{L}\ln\bm{x}=-\nabla_{\mathscr{D}} F(\bm{x}||\bm{1}),\label{eq:3g}
\end{align}
with $\textbf{L}$  the constant coefficient Laplacian, and $\textbf{R}(\bm{x}),\textbf{X}(\bm{x})$ as in the proof of Theorem \ref{thm:converg}.
As $\textbf{R}(\bm{x})$ and $\textbf{X}(\bm{x})$ are positive definite symmetric matrix functions for $\bm{x}\in\mathbb{R}_{>0}^{n}$, they 
define Riemannian metrics via their inverses. 

The scaling-invariant protocol on an undirected graph \mbox{generates} a projected gradient flow in the usual Euclidean metric setting. Therefore, according to the preceding discussion, on a completely normalized 
graph trajectories must evolve along steepest descent directions of free energy on the appropriate simplex of constant mass distributions.

In Fig. \ref{fig:fe3dtraj} this free energy gradient property is illustrated for the scaling-invariant protocol running on such a graph over three nodes. 
The gray outlined triangle marks the set of mass-3 distribution vectors.
Color-coded are iso-level curves of $F(\bm{x}||\bm{1})=\sum_i x_i(\ln x_i -1) + 3 $.
This illustration also highlights the appropriateness of the $n-1$-dimensional set of mass distribution vectors within positive $n$-space in considering the free energy functional:
Free energy is convex and permutation invariant on this set with minimum obtained at the consensus state.
 Three trajectories are plotted in black with initial conditions marked by a cross. We see that solution curves indeed follow steepest gradient descent directions of free energy on $\mathscr{D}_{m=3}$ being 
directed towards the minimum of this function, which is obtained at the consensus point.

\begin{figure}[t]
\centering
\psfrag{x}[l][cc]{\small$x_1$} \psfrag{y}[r][cc]{\small$x_2$}
\psfrag{z}[r][cc]{\small$x_3$}
 \psfrag{N}[cc][cc]{\small{number of simulation}}
\psfrag{0}[cc][cc]{\tiny$0$}\psfrag{1}[cc][cc]{\tiny$1$}
\psfrag{2}[cc][cc]{\tiny$2$}\psfrag{3}[cc][cc]{\tiny$3$}
\psfrag{0.5}[cc][cc]{\tiny$ $}\psfrag{1.5}[cc][cc]{\tiny$ $}
\psfrag{2.5}[cc][cc]{\tiny$ $}
\includegraphics[scale=0.5]{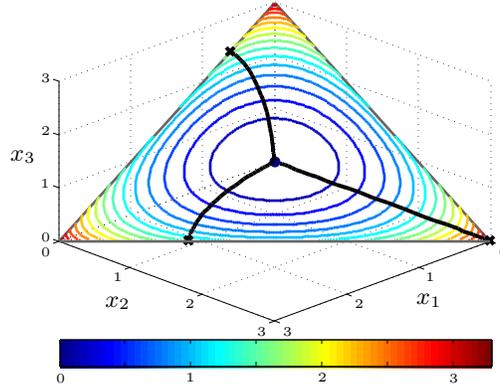}
\caption{Three trajectories generated by scaling-invariant protocol 
converging to consensus in a free energy potential on the simplex $\mathscr{D}_{m=3}$}
\label{fig:fe3dtraj}
\end{figure}

\subsection{Constrained non-linear optimization view}
Motivated by the preceding results for the entropic protocol we provide a novel variational characterization of the \mbox{geometric} mean linking dynamic problems in consensus theory with static problems in non-linear constrained optimization.

\begin{theorem}[Novel characterization of the geometric mean]
The geometric mean of a vector $\bm{x}\in \mathbb{R}_{>0}^n$ is characterized as the value $\mathsf{am}(\bm{x}^*)$, where 
\begin{equation}
\bm{x}^*= \argmin_{\bm{y}\in \mathbb{R}_{>0}^n} F(\bm{y}||\bm{1}), \ \ \mathrm{subject\; to}\; \prod_{i=1}^ny_i=\prod_{i=1}^n x_i.\label{eq:minF}
\end{equation}
That is, $\bm{x}^*$ minimizes free energy on the manifold of states having constant product of component values. In particular, this vector has the form of a consensus state with 
agreement value precisely the geometric mean of $\bm{x}$.
\end{theorem}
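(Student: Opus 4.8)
The plan is to treat \eqref{eq:minF} by Lagrange multipliers in the logarithmic coordinates that are natural for the geometric mean. First I would turn the product constraint $\prod_i y_i=\prod_i x_i$ into the affine constraint $\sum_i\ln y_i=c$, with $c:=\sum_i\ln x_i$, and note from \eqref{eq:fe} that the objective is $F(\bm{y}\|\bm{1})=\sum_i h(y_i)+\mathrm{const}$ with $h(t):=t(\ln t-1)$, a strictly convex function on $\mathbb{R}_{>0}$ with $h'(t)=\ln t$ (hence $\nabla F=\ln\bm{y}$, as already noted in Section~\ref{sec:energetics}) and $h(t)\ge h(1)=-1$. The stationarity conditions of $\mathcal{L}(\bm{y},\mu)=\sum_i h(y_i)-\mu\big(\sum_i\ln y_i-c\big)$ read $\ln y_i^{\ast}=\mu/y_i^{\ast}$, i.e., every component of a stationary point satisfies the \emph{same} scalar equation $y_i^{\ast}\ln y_i^{\ast}=\mu$.

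Granting that, the remainder is short. If the stationarity conditions force $y_1^{\ast}=\cdots=y_n^{\ast}=:y^{\ast}$, then the constraint gives $(y^{\ast})^n=\prod_i x_i$, hence $y^{\ast}=\sqrt[n]{\prod_i x_i}=\mathsf{gm}(\bm{x})$; thus $\bm{x}^{\ast}=\mathsf{gm}(\bm{x})\bm{1}$ is a consensus state and $\mathsf{am}(\bm{x}^{\ast})=\mathsf{gm}(\bm{x})$, which is the assertion. To certify that the stationary point is a global minimizer I would observe that $F$ is coercive on the closed feasible manifold $\mathscr{M}:=\{\bm{y}\in\mathbb{R}_{>0}^n:\prod_i y_i=\prod_i x_i\}$: along any sequence in $\mathscr{M}$ leaving every compact set one has $\max_i y_i\to\infty$ (the product constraint forbids all components from staying bounded away from $0$ and $\infty$), so $F\to\infty$ since $\sum_i h(y_i)\ge h(\max_i y_i)-(n-1)$ by $h\ge-1$. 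Hence a minimizer on $\mathscr{M}$ exists and must be stationary, and uniqueness of the stationary point finishes the proof. (There is also a dynamical reading: by Theorem~\ref{thm:wgm} the entropic protocol \eqref{eq:2} on a balanced, strongly connected graph conserves $\prod_i x_i(t)$, since $\sum_i\ln x_i$ is then invariant by \eqref{eq:invprop}, and---read as the free-energy flow \eqref{eq:2g}---it carries every initial state in $\mathscr{M}$ to $\mathsf{gm}(\bm{x}(0))\bm{1}=\bm{x}^{\ast}$; making ``$F$ decreases along \eqref{eq:2}'' rigorous on all of $\mathscr{M}$ meets the same subtlety as below.)

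The main obstacle is precisely the ``all components equal, hence globally minimal'' step, because $t\mapsto t\ln t$ is not injective on $\mathbb{R}_{>0}$: it decreases on $(0,1/e)$, increases on $(1/e,\infty)$, and attains minimum $-1/e$. Thus $y_i^{\ast}\ln y_i^{\ast}=\mu$ pins all components to one common value outright only when $\mu\notin(-1/e,0)$; in particular when $\mu\ge0$, equivalently $\mathsf{gm}(\bm{x})\ge1$, i.e., $\prod_i x_i\ge1$, the equation $t\ln t=\mu$ has the unique positive root $t=\mathsf{gm}(\bm{x})$, so the consensus state is the \emph{only} stationary point and the coercivity argument makes it the global minimizer. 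When $\prod_i x_i<1$ one has in addition to rule out the ``mixed'' stationary points---those in which each $y_i^{\ast}$ equals one of the two roots of $t\ln t=\mu$---by a second-order or energy comparison along $\mathscr{M}$ (or by pinning down the normalization under which the clean statement is meant); that comparison is the step I would concentrate on.
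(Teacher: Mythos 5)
Your route is essentially the paper's: Lagrange multipliers for the product constraint (you use the equivalent affine form $\sum_i\ln y_i=c$), leading to the common scalar condition $y_i^\ast\ln y_i^\ast=\mu$ at every stationary point, then "all components equal" plus the constraint gives $\mathsf{gm}(\bm{x})\bm{1}$. Up to that point you are correct, and in fact more careful than the paper: the paper dismisses the non-injectivity of $t\mapsto t\ln t$ by asserting parenthetically in \eqref{eq:solchar} that the multiplier $\lambda$ is positive, which is unjustified --- at the consensus candidate one has $\lambda=\ln\mathsf{gm}(\bm{x})/\mathsf{gm}(\bm{x})^{n-1}$, which is negative whenever $\prod_i x_i<1$. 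So the obstacle you single out is a genuine gap, and it is the paper's gap as much as yours; your coercivity argument for existence of a minimizer is an addition the paper does not even attempt.

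However, the comparison step you propose to concentrate on cannot succeed in general, because without a normalization the statement itself fails: $F(\cdot\|\bm{1})$ is not scaling invariant, and for small products the consensus point is not the constrained minimizer. Concretely, take $n=2$ and $x_1x_2=e^{-4}$. The consensus candidate $(e^{-2},e^{-2})$ gives $\sum_i y_i(\ln y_i-1)=-6e^{-2}\approx-0.81$, whereas the feasible point $(1,e^{-4})$ gives $-1-5e^{-4}\approx-1.09$, which is strictly smaller; indeed, parametrizing the constraint by $y_1=e^{s}$, $y_2=e^{c-s}$ with $c=\ln(x_1x_2)$, the second derivative of the objective along the manifold at the consensus point equals $(2+c)\,\mathsf{gm}(\bm{x})$, so for $x_1x_2<e^{-2}$ that point is even a local maximum on the constraint set, and the true minimizers are exactly the asymmetric ``mixed-root'' stationary points you were worried about. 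The clean fix is the normalization you anticipated: assume $\prod_i x_i\geq 1$. Then at any stationary point the common value $\mu$ must be nonnegative --- if $\mu<0$, both roots of $t\ln t=\mu$ lie in $(0,1)$, so the product of the components would be $<1$, violating the constraint --- hence all components equal the unique root $\mathsf{gm}(\bm{x})\geq 1$, and your coercivity argument upgrades this unique stationary point to the global minimizer, completing the proof. Without such a hypothesis, the theorem (and the paper's proof of it) does not hold as stated.
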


\begin{proof}
Define the Lagrangian
\begin{equation}
\mathcal{L}(\bm{y},\lambda)=F(\bm{y}||\bm{1})-\lambda \left(\prod_iy_i -\prod_i x_i\right).
\end{equation}
The solution of the constrained free energy minimization problem satisfies
the first order optimality conditions
\begin{align}
\nabla_{\lambda} \mathcal{L} &= \prod_{j=1}^n x_j- \prod_{j=1}^n y_j=0 \Leftrightarrow
\prod_{j\not = i}y_j = \frac{\prod_{k=1}^nx_k}{y_i}, \label{eq:Laglambda} \\
\nabla_{y_i} \mathcal{L} &=\ln y_i -\lambda \prod_{j\not = i}y_j=0,
 \ \ i \in N \label{eq:LagX}
\end{align}
which consequently leads to the solution characteristic
\begin{equation}
y_i\ln y_i=\lambda \prod_{k=1}^nx_k= constant, \ \ \forall i\in N. \label{eq:solchar}
\end{equation}
The right hand side in \eqref{eq:solchar} is positive (the multiplier $\lambda$ is positive and the values $x_i>0$ by assumption) and the function $y \ln y$ is increasing on the domain where it takes positive values. Therefore, \eqref{eq:solchar} has a unique solution, and this solution
is the same for all $i\in N$, i.e., a consensus state.

Next, we show that the agreement value of the consensus state is the geometric mean of $\bm{x}$. 
Writing $\bm{y}=y\bm{1}$ and substituting into \eqref{eq:Laglambda} yields
\begin{equation}
\prod_{k=1}^n y_k=y^{n}=\prod_{k=1}^n x_k \Leftrightarrow y=\mathsf{gm}(\bm{x}).
\end{equation}
That is, if $\bm{x}\in \mathbb{R}_{>0}^n$, then the solution of \eqref{eq:minF} is
$\bm{x}^*=\mathsf{gm}(\bm{x})\bm{1}$, so that $\mathsf{am}(\bm{x}^*)=\mathsf{gm}(\bm{x})$.
\end{proof}
Sum-separable energy functions play an axiomatic role in dissipative 
interconnected systems \cite{willems1972a} where they represent energy stored in local subsystems. In contrast, energy functions of the interaction type usually represent power dissipated 
``across'', e.g., resistor elements, see for instance \cite{Mangesius2016} for a further discussion.
Hence, the free energy minimization property seems to be the natural gradient setting for the time-continuous entropic consensus network when seen as analog circuit device solving a minimization 
problem.

\section{Conclusion}
In this paper we propose and study novel non-linear continuous-time consensus protocols driven in three distinct ways by the geometric mean: the polynomial, the entropic, and the scaling-invariant consensus protocols. 
The three protocols are aligned in a free energy gradient property on the simplex of constant mass distribution vectors.
The entropic consensus dynamics represents a generalization of the well-known \mbox{average} consensus problem as the asymptotically reached agreement value corresponds to the (weighted) geometric mean of the initial state. 
Based on the free energy gradient property for the entropic dynamics, we provide a novel variational \mbox{characterization} of the geometric mean using a non-linear constrained optimization problem.


\begin{appendix}

\section{Agreement value polynomial case --- numerical study} \label{sec:numstudy}
In the following we study the consensus value of the polynomial protocol on a normalized balanced digraph using numerical simulations. We observe that the consensus value can be upper bounded by the arithmetic mean of the initial state, and lower bounded by the arithmetic-geometric mean of the arithmetic mean and the geometric mean of the initial condition.

The arithmetic-geometric mean $\mathsf{agm}(a,b)$ of two positive numbers $a,b$, can be defined as the limiting point of a discrete time 
dynamical system, $\{a_k,b_k\}_{k\geq 0}$, $k\in \mathbb{N}$ that satisfies the algorithmic update rule
\begin{equation}\label{eq:agmalgorithm}
\begin{pmatrix}
a_{k+1} \\
b_{k+1}
\end{pmatrix}
= \begin{pmatrix}
\mathsf{am}(\{a_k,b_k\})\\
\mathsf{gm}(\{a_k,b_k\})
\end{pmatrix}.
\end{equation}
It is obtained as the limit
\begin{equation}
\mathsf{agm}(a,b):=\lim_{k\to\infty} a_k =\lim_{k\to\infty} b_k, \ \ a_0=a, b_0=b;
\end{equation}
The fixed-point iteration 
\eqref{eq:agmalgorithm} is due to Carl Friedrich Gauss, who was concerned with computing the perimeter of ellipses, which up until today is a topic of scientific discourse \cite{Adlaj2012}\cite{Borwein1987}.
The arithmetic-geometric mean is related to the solution of a complete elliptic integral, as
\begin{equation}
\mathsf{agm}(a,b)=\frac{\pi}{2}\frac{1}{I(a,b)},\, I(a,b):=\int_0^{\frac{\pi}{2}}\frac{\mathrm{d}\varphi}{\sqrt{a^2\cos^2\varphi+b^2\sin^2\varphi}},
\end{equation}
see, e.g., \cite{Carlson1971}.

We first consider completely connected normalized balanced graphs, that differ only in the number of nodes, such that $N\in \{2,3,\ldots,50\}$.
For each of these graphs we run the polynomial protocol for 50 random initial conditions sampled from the interval $]0,10[$, such that $\mathsf{am}(\bm{x}(0))=c_1$, and 
$\mathsf{gm}(\bm{x}(0))=c_2$, 
where $c_1>c_2>0$. 
In Fig. \ref{fig:ConsValcomplete} the reached agreement values for this experiment 
are plotted as black circles. The red squares show the arithmetic mean value of the initial condition, sampled such that $c_1=4$, and the blue squares represent the geometric mean of the initial states, sampled such that $c_2=3$.
We observe that for each graph, every of the reached consensus values lies above the green line,
which appears to be a tight and strict lower bound.
We found that the value of the green marks computes as the arithmetic-geometric mean of the arithmetic and the geometric mean of the initial state, i.e., its value corresponds to the number $\mathsf{agm}(c_1,c_2)$.

\begin{figure}[h]
\centering
\psfrag{y}[cc][cc]{$\bar{x}$} \psfrag{N}[cc][cc]{Number of nodes}
\psfrag{0}[cc][cc]{\tiny$0$}\psfrag{10}[cc][cc]{\tiny$10$}
\psfrag{20}[cc][cc]{\tiny$20$}\psfrag{30}[cc][cc]{\tiny$30$}
\psfrag{40}[cc][cc]{\tiny$40$}\psfrag{50}[cc][cc]{\tiny$50$}
\psfrag{4}[cc][cc]{\tiny$4$}\psfrag{3}[cc][cc]{\tiny$3$}
\psfrag{2.8}[cc][cc]{\tiny$2.8 $}\psfrag{3.2}[cc][cc]{\tiny$3.2 $}
\psfrag{3.4}[cc][cc]{\tiny$3.4 $}\psfrag{3.8}[cc][cc]{\tiny$3.8 $}
\psfrag{4.2}[cc][cc]{\tiny$4.2$}
\includegraphics[scale=0.5]{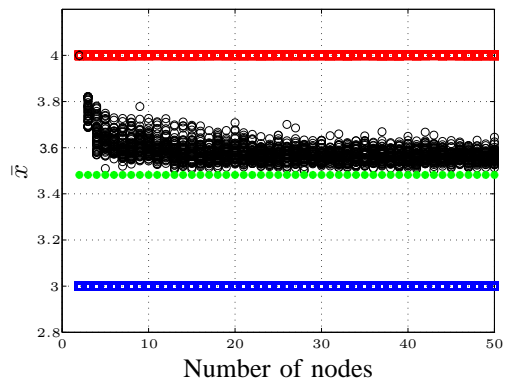}
\caption{Consensus values (black) for all-to-all normalized balanced graphs for 50 random initial conditions such that the arithmetic mean of initial condition (red square) takes value 4 and the 
geometric mean (blue square) has value 3. The green marks represent $\mathsf{agm}(3,4)$.}
\label{fig:ConsValcomplete}
\end{figure}

To verify that this observation is independent of the set mean value constraints $c_1$, $c_2$, we next consider the polynomial protocol on a completely connected, balanced, normalized graph with number of nodes being fixed at $N=5$. We are interested in the values of the ratio
$\frac{\mathsf{ref}}{\bar{x}}$, where $\mathsf{ref}\in \{\mathsf{am}(\bm{x}(0)),\mathsf{gm}(\bm{x}(0)),\mathsf{agm}(\mathsf{am}(\bm{x}(0)),\mathsf{gm}(\bm{x}(0))) \}$.
Clearly, the closer this fraction is to one, the better is ``$\mathsf{ref}$'' suited as an estimate for the asymptotically reached consensus value, given on the basis of the initial data.

In Fig. \ref{fig:consValagm} we plotted this ratio $\frac{\mathsf{ref}}{\bar{x}}$ for
500 random initializations sampled from the interval $]0,10[$. The red dots mark this ratio for $\mathsf{ref}=\mathsf{am}(\bm{x}(0))$, the blue ones for $\mathsf{ref}=\mathsf{gm}(\bm{x}(0))$, and the green ones mark the ratio for reference taken as arithmetic-geometric mean of the arithmetic mean and the geometric mean of the initial state.
We can confirm the previous observation that for each trajectory the arithmetic mean of the initial condition
is an upper bound for the consensus value (red dots mark above one), the geometric mean a lower bound (green dots mark below one), and so is the arithmetic-geometric mean (green dots mark below one), whereas this value is a tighter lower bound than the geometric mean. In particular, the arithmetic-geometric mean bound appears to be in many cases a good estimate of the achieved consensus value as the green dots cluster very near to the black line.
\begin{figure}[h]
\centering
\hspace*{-0.6cm}
\psfrag{y}[cc][cc]{$ $} \psfrag{N}[cc][cc]{\small{Number of simulation}}
\psfrag{0}[cc][cc]{\tiny$0$}\psfrag{1000}[cc][cc]{\tiny$1$}
\psfrag{2000}[cc][cc]{\tiny$2$}\psfrag{3000}[cc][cc]{\tiny$3$}
\psfrag{4000}[cc][cc]{\tiny$4$}\psfrag{5000}[cc][cc]{\tiny$5$}
\psfrag{6000}[cc][cc]{\tiny$7$}\psfrag{7000}[cc][cc]{\tiny$7$}
\psfrag{8000}[cc][cc]{\tiny$8$}\psfrag{9000}[cc][cc]{\tiny$9$}
\psfrag{10000}[l][l]{\tiny$10 \times 10^3$}
\psfrag{0.2}[r][r]{\tiny$0.2$}\psfrag{0.4}[r][r]{\tiny$0.4$}
\psfrag{0.6}[r][r]{\tiny$0.6$}\psfrag{0.8}[r][r]{\tiny$0.8$}
\psfrag{1}[r][r]{\tiny$1$} \psfrag{1.2}[r][r]{\tiny$1.2$}\psfrag{1.4}[r][r]{\tiny$1.4$} \psfrag{1.6}[r][r]{\tiny$1.6$}\psfrag{1.8}[r][r]{\tiny$1.8$}
\includegraphics[scale=0.5]{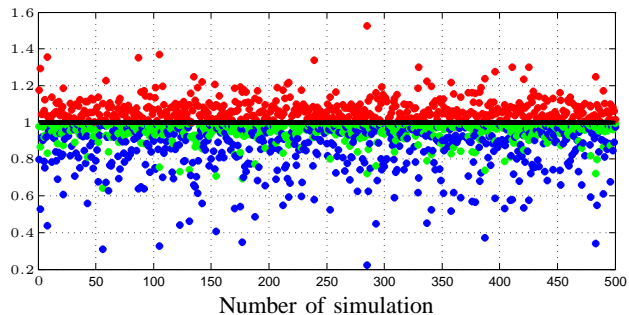}
\caption{Consensus ratios $\mathsf{ref}/\bar{x}$, $\mathsf{ref}=\mathsf{am}(\bm{x}(0))$ (red), $\mathsf{ref}=\mathsf{gm}(\bm{x}(0))$ (blue), $\mathsf{ref}=\mathsf{agm}\{\mathsf{am}(\bm{x}(0)),\mathsf{gm}(\bm{x}(0)\}$ (green), and $\mathsf{ref}=\bar{x}$ (black) for $500$ simulations of a normalized complete balanced graph on $5$ nodes with initial conditions randomly sampled from the interval $]0,10[$.
}
\label{fig:consValagm}
\end{figure}

Eventually, we test if the $\mathsf{agm}$ as lower bound is independent of the normalization of the weighting and independent of the number of connected nodes
that is, if it is a lower bound for the consensus value for every $(N,d)$-regular graph, i.e., balanced graphs on $N$ nodes with $d\in 
\mathbb{N}$ nodes being connected to each node $i\in N$.
In Fig. \ref{fig:agmpermean} the ratio $\mathsf{agm}(c_1,c_2)/\bar{x}$, $c_1=\mathsf{am}(\bm{x}_0)$, $c_2=\mathsf{gm}(\bm{x}_0)$  is plotted for $N=30$, $d\in\{2,3,\ldots,22\}$, where the red dots 
mark the defined ratio for non-normalized unweighted balanced graphs, (i.e., $w_{ij}\in \{0,1\}$), and
the blue dots mark this ratio for normalized ones. For each graph we computed $30$ trajectories for random initial conditions sampled as before. We see that the $\mathsf{agm}$ lower bound holds only for the normalized case; it is independent of the degree $d$.

\begin{figure}[h]
\centering
\psfrag{y}[cc][cc]{$\mathsf{agm}/\bar{x} $} \psfrag{deg}[cc][cc]{\small{degree}}
\psfrag{0}[cc][cc]{\tiny$0$}\psfrag{1000}[cc][cc]{\tiny$1$}
\psfrag{2000}[cc][cc]{\tiny$2$}\psfrag{3000}[cc][cc]{\tiny$3$}
\psfrag{4000}[cc][cc]{\tiny$4$}\psfrag{5000}[cc][cc]{\tiny$5$}
\psfrag{6000}[cc][cc]{\tiny$7$}\psfrag{7000}[cc][cc]{\tiny$7$}
\psfrag{8000}[cc][cc]{\tiny$8$}\psfrag{9000}[cc][cc]{\tiny$9$}
\psfrag{10000}[l][l]{\tiny$10 \times 10^3$}
\psfrag{0.2}[r][r]{\tiny$0.2$}\psfrag{0.4}[r][r]{\tiny$0.4$}
\psfrag{0.6}[r][r]{\tiny$0.6$}\psfrag{0.8}[r][r]{\tiny$0.8$}
\psfrag{1}[r][r]{\tiny$1$} \psfrag{1.2}[r][r]{\tiny$1.2$}\psfrag{1.4}[r][r]{\tiny$1.4$} \psfrag{1.6}[r][r]{\tiny$1.6$}\psfrag{1.8}[r][r]{\tiny$1.8$}
\includegraphics[scale=0.5]{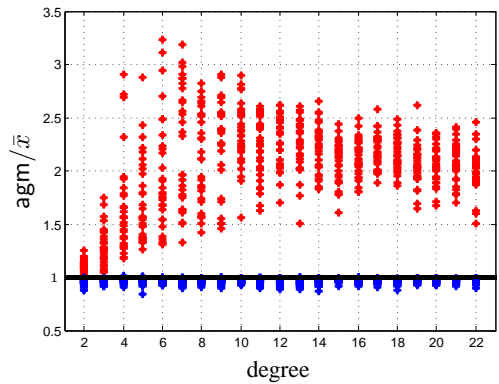}
\caption{Ratio $\mathsf{agm}(c_1,c_2)/\bar{x}$, $c_1=\mathsf{am}(\bm{x}_0)$, $c_2=\mathsf{gm}(\bm{x}_0)$ for $(N,d)$-regular graphs, $N=30$, $d\in\{2,3,\ldots,22\}$; 
non-normalized weighting (red) and normalized weighting (blue).
}
\label{fig:agmpermean}
\end{figure}

\end{appendix}

\bibliographystyle{ieeetr}
{\small
\bibliography{ConsGeoMeanBib}
}

%

%

%



\end{document}